\newcommand{\ep}{{ \varepsilon  }}
\newcommand{\bq}{\begin{equation}}
\newcommand{\eq}{\end{equation}}
\newcommand{\pa}{\partial}
\newcommand{\la}{\lambda}
\newcommand{\ga}{\gamma}
\newcommand{\ddt}{\frac{d}{dt}}
\newcommand{\bbr}{{ \mathbb{R}  }}
\newcommand{\na}{\nabla}
\newcommand{\wni}{\langle n_i \rangle}
\newcommand{\wki}{\langle k_i \rangle}
\newcommand{\wpi}{\langle p_i \rangle}
\begin{document}
\bibliographystyle{plain}


\newcommand{\kwon}[1]{{\color{blue} #1  }}

\newcommand{\noi}{\noindent}       
\newcommand{\Z}{\mathbb{Z}}
\newcommand{\R}{\mathbb{R}}
\newcommand{\C}{\mathbb{C}}
\newcommand{\T}{\mathbb{T}}
\newcommand{\bul}{{}}

\newcommand{\N}{\mathcal{N}}
\newcommand{\RR}{\mathcal{R}}
\newcommand{\D}{\mathcal{D}}
\newcommand{\HH}{\mathcal{H}}

\newcommand{\al}{a}
\newcommand{\be}{b}
\newcommand{\Om}{\Omega}
\newcommand{\om}{\omega}
\newcommand{\omm}{\Omega}
\newcommand{\s}{\sigma}
\newcommand{\si}{\Sigma}
\newcommand{\ft}{\widehat}
\newcommand{\wt}{\widetilde}
\newcommand{\cj}{\overline}
\newcommand{\dx}{\partial_x}
\newcommand{\dt}{\partial_t}
\newcommand{\dd}{\partial}
\newcommand{\invft}[1]{\overset{\vee}{#1}}
\newcommand{\lrarrow}{\leftrightarrow}
\newcommand{\embeds}{\hookrightarrow}
\newcommand{\LRA}{\Longrightarrow}
\newcommand{\LLA}{\Longleftarrow}

\newcommand{\wto}{\rightharpoonup}
\newcommand{\Omn}{\Om ({\bf n})}
\newcommand{\edeno}{  \ep^2  N_{\infty}^{4A^2}N_{\al+1}^m C^{A/m}}
\newcommand{\deno}{  N_{\infty}^{4A^2}N_{\al+1}^m C^{A/m}}
\newcommand{\mini}{ |\bn _-|}
\newcommand{\nal}{N_{\al} \le |\bn _-| < N_{\al+1}}
\newcommand{\subal}{{\substack{3\\ \nal}}}
\newcommand{\nalty}{N_{\al+1} \le |\bn_-| <  N_{\infty}}
\newcommand{\subalty}{{\substack{3\\ \nalty}}}

\newcommand{\pim}[2] { \Pi_{j= 1}^ {{#1}} ( |m_j| \wedge N_{#2})^{2s_1}N_{#2}^{2\tau} }
\newcommand{\pin}[2] { \Pi_{j= 1}^ {{#1}}( |n_j| \wedge N_{#2})^{s_1}N_{#2}^{\tau}}

\newcommand{\I}[2]{{\mathbf I}_{({#1},{N_{#2}})}}
\newcommand{\Q}[2]{{\mathbf Q}_{({#1},N_{#2})}}
\newcommand{\Qq}[2]{{\mathbf Q}_{4,({#1},N_{#2})}}

\newcommand{\cobm} { {\mathbf I}_{({\bf m},N_\al)} }
\newcommand{\cobmb} { {\mathbf I}_{({\bf m},N_\be)} }
\newcommand{\cobmu} { {\mathbf I}_{({\bf m},N_{\al+1})} }
\newcommand{\cobmp} { {\mathbf I}_{({\bf m'},N_\al)}}
\newcommand{\cobnh} { {\mathbf{Q}_{4,({\bf n},N_\al) }}}
\newcommand{\cobnhb} { {\mathbf{Q}_{4,({\bf n},N_\be) }} }
\newcommand{\cobnph} { {\mathbf{Q}_{4,({\bf n'},N_\al) }} }
\newcommand{\cobl} {{\mathbf I}_{({\bf l},N_\al)} }
\newcommand{\cobn} {{ \mathbf Q}_{({\bf n},N_\al)}}
\newcommand{\qa}[1] {{ \mathbf Q}_{( {#1},N_\al)}}
\newcommand{\ia}[1]{{ \mathbf I}_{( {#1},N_\al)}}
\newcommand{\nnp}{ ( \bn, \bnp)}
\newcommand{\mmp}{ (\bm, \bmp)}
\newcommand{\msn}{ (\bm, \bm^{\sim n})}
\newcommand{\nsn}{ (\bn^{\sim n}, \bn^{\sim n})}
\newcommand{\cobnb} {{ \mathbf Q}_{({\bf n},N_\be)}}
\newcommand{\cobnp} {{ \mathbf Q}_{({\bf n'},N_\al)}}
\newcommand{\cobnu} {{ \mathbf Q}_{({\bf n},N_{\al+1})}}
\newcommand{\dec}{N_{\al}^{-4s_1}}
\newcommand{\hdec}{N_{\al}^{-2s_1}}

\newcommand{\lbm} {|{\bf m}|}
\newcommand{\lbn} {|{\bf n}|}
\newcommand{\lbmp} {|{\bf m'}|}
\newcommand{\lbnp} {|{\bf n'}|}
\newcommand{\lbl} {|\bf l|}
\newcommand{\io}[1]{I_{n_{#1}}^0}
\newcommand{\ii}[1]{I_{n_{#1}}}
\newcommand{\Ibn}{I_{\bn}}
\newcommand{\ibn}{I_{\bn}^0}
\newcommand{\jb}[1]
{\langle #1 \rangle}
\newcommand{\cmnIQ}{c_{{\bf m}{\bf n}} I_{\bf m} q_{\bf n} }
\newcommand{\cmI}{c_{{\bf m}} I_{{\bf m}}}
\newcommand{\cmn}{c_{{\bf m \bf n}}}
\newcommand{\dmn}{d_{{\bf m \bf n}}}
\newcommand{\jm}{I_{\bf m}}
\newcommand{\qn}{q_{\bf n}}
\newcommand{\bmn}{b_{\bm \bn}}
\newcommand{\amn}{a_{\bm\bn}}
\newcommand{\bm}{ {\bf m}}
\newcommand{\bn}{ {\bf n}}
\newcommand{\bk}{ {\bf k}}
\newcommand{\bp}{ {\bf p}}

\newcommand{\bmp}{ {\bf{ m}'}}
\newcommand{\bnp}{{\bf{n}'}}
\newcommand{\bpp}{{\bf{p}'}}
\newcommand{\bkp}{{\bf{k}'}}
\newcommand{\jmp}{I_{\bf{ m}'}}
\newcommand{\qnp}{q_{\bf{ n}'}}
\newcommand{\jl}{I_{\bf l}}
\newcommand{\bl}{{\bf l}}
\newcommand{\cmnp} {c_{\bm' \bn'}}

\newcommand{\PI}{ {\mathbf I}_{({\bf m},N_\al)}}
\newcommand{\Pq}{ {\mathbf Q}_{({\bf n},N_\al)}}
\newcommand{\Pqq}{{\mathbf{Q}_{4,({\bf n},N_\al) }}}

\newtheorem{defn}{Definition}
\newtheorem{lemma}[defn]{Lemma}
\newtheorem{proposition}{Proposition}
\newtheorem{theorem}[defn]{Theorem}
\newtheorem{cor}{Corollary}
\newtheorem{remark}{Remark}
\numberwithin{equation}{section}

\newenvironment{pf}{{\bf Proof.}}{\hfill\fbox{}\par\vspace{.2cm}}
\newenvironment{pfthm1}{{\par\noindent\bf
            Proof of Theorem 1. }}{\hfill\fbox{}\par\vspace{.2cm}}
\newenvironment{pfthm2}{{\par\noindent\bf
            Proof of Theorem 2. }}{\hfill\fbox{}\par\vspace{.2cm}}
\newenvironment{pfthm3}{{\par\noindent\bf
Proof of Theorem 3. }}{\hfill\fbox{}\par\vspace{.2cm}}
\newenvironment{pfthm4}{{\par\noindent\bf
Proof of Theorem 4. }}{\hfill\fbox{}\par\vspace{.2cm}}
\newenvironment{pfthm5}{{\par\noindent\bf
Proof of Theorem 5. }}{\hfill\fbox{}\par\vspace{.2cm}}
\newenvironment{pflemsregular}{{\par\noindent\bf
            Proof of Lemma \ref{sregular}. }}{\hfill\fbox{}\par\vspace{.2cm}}

\title[Stability of NLS in high Sobolev norm]{{The stability of nonlinear Schr\"odinger equations with a potential in high Sobolev norms revisited}}
\author[M. Chae]{Myeongju Chae}
\address{ Department of Applied Mathematics, Hankyong National University, Ansong, Republic of Korea}
\address
{Department of Mathematics, University of Wisconsin-Madison, USA} \email{mchae@hknu.ac.kr}
\author[S. Kwon]{Soonsik Kwon}
\address{Department of Mathematical Sciences, Korea Advanced Institute of Science and Technology, Daejeon, Republic of Korea} \email{soonsikk@kaist.edu}

 \begin{abstract}
 {{We consider the nonlinear Schr\"odinger equations with a potential on $\mathbb T^d$. For almost all potentials, we show the almost global stability in very high Sobolev norms. We apply an iteration of the Birkhoff normal form, as in the formulation introduced by Bourgain \cite{Bo00}. This result reprove a dynamical consequence of the infinite dimensional Birkhoff normal form theorem by Bambusi and Grebert \cite{BG} }
 }
 
\end{abstract}
\subjclass[2010]{35Q55, 37K55}
\keywords{Birkhoff normal form, nonlinear Schr\"odinger equations, {almost global} stability}

\maketitle

\section{Introduction}
We consider the nonlinear Schr\"odinger equation with a potential $V$ 
\begin{align}\label{nls}
 \qquad&  iu_t = -\Delta u + V\ast u + 2|u|^2u \quad x\in \mathbb T^d, t\in \bbr, \\
&u(0,x) = u_0(x) \, \in H^s(\mathbb T^d) \nonumber
\end{align}
where $V$ is a smooth convolution potential. 
The system \eqref{nls}  is an infinite dimensional Hamiltonian system associated with the Hamiltonian functional
\[ H(u, \bar u) = \int_{\mathbb T^d} |\na u|^2 + (V\ast u) \bar u +  |u|^4 dx .\]

The aim of this work is to use a Hamiltonian dynamical method for studying the long time stability of small solutions in $H^s(\mathbb T^d)$ for a sufficiently large $s$.


In this paper we consider $V$ a random potential
\[ V(x) = \sum_{n\in \Z} v_n e^{inx}, \quad v_n  = R(1+ |n|)^{-m} \sigma_n, \] 
$\{\sigma_n\}$ being a sequence of i.i.d. random variables uniformly distributed over $[-\frac 12, \frac 12]$. In usual NLS, the potential $V(x)$ is multiplicative type, but here we choose the convolution type potential so that the formalism is simpler in Fourier variables. We assume that the potential $ V$ is an even function to ensure its Fourier coefficient $v_n \in \bbr$. We define the measure space for potential: 
\begin{equation}\label{dataset}
  \mathcal W = \{ V= \sum_{n\in \Z} v_n e^{inx}|\sigma_n:= R^{-1} v_n (1 + |n|)^{m}\in [ -\frac{1}{2}, \frac{1}{2}]\},
\end{equation}
for some $m \in \Z_+$. We endow $\mathcal W$ with the product probability measure, that is, 
\begin{align*}
 \mathbb P \big( &\{ W= \sum v_n e^{inx} \in \mathcal W |  \sigma_{n_i} \in (a_i, b_i)\subset
 [-1/2, 1/2]  \mbox { for } i = 1, \dots N ,
\mbox{ otherwise } w_n = 0\} \big) \\
 &= \prod_{i=1}^N (b_i -a_i).
\end{align*}

As we handle small solutions, we rescale $u(t,x)$ by 
\bq
\label{scaling} u(t,x)=  \ep q(\ep^2 t, x) = \ep\sum q_n(\ep^2t) e^{inx}.
\eq
We consider for initial data $q_0(x) = q(0, x)$  of $O(\ep)$ in $H^s( \mathbb T^d)$, 
and so $ \| u_0\|_{H^s} \le O(\ep^2)$. Then, the equation \eqref{nls} becomes 
\[  iq_t = -\ep^{-2}\Delta u + \ep^{-2}V\ast u + |q|^2q \]
with Hamiltonian 
\[ H(q, \bar q) = \int_{\mathbb T^d} \ep^{-2} |\na q|^2 + \ep^{-2}(V\ast q) \bar q +  |q|^4 dx.\]
One can rewrite  the infinite dimensional Hamiltonian system in Fourier variable $\{q_n\}$:
\begin{equation}\label{qn}
i\dot q_n = \frac{\pa H}{\pa \bar q_n}
\end{equation}
where 
\begin{equation} \label{hqn}
H(q, \bar q) = \sum ( \frac{|n|^2}{\ep^2}  +\frac{v_n}{\ep^2})|q_n|^2 + \sum_{n_1-n_2+n_3-n_4 = 0}
q_{n_1}\bar q_{n_2}q_{n_3}\bar q_{n_4}.
\end{equation}
We state our main theorem.
\begin{theorem}\label{main}
There exists a subset $\mathcal V \subset \mathcal W$ of full measure, such that for  a given $V
\in \mathcal V$ the following holds; for a given $B>0$ there exist $C$, $s$, and $\ep(B) >0$ such that 
if $\| u(0)\|_{H^s} \le \ep^2$, the solution $u$ to the Cauchy problem
\[iu_t = -\Delta u + V\ast u + |u|^2u, \quad x\in \mathbb T^d, t\in \bbr\]
will satisfy 
\[ \sup_{|t|<T} \|u(t)\|_{H^s} < 2\ep^2\]
where $T$ can be as large as $\ep^{-B}$.
\end{theorem}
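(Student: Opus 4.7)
The plan is to apply Bourgain's iterative Birkhoff normal form procedure to the rescaled Hamiltonian in \eqref{hqn}. I would decompose $H = H_2 + P_4$, where $H_2 = \sum_n \om_n |q_n|^2$ with frequencies $\om_n = (|n|^2 + v_n)/\ep^2$ and $P_4$ is the quartic term supported on the zero-sum set $n_1-n_2+n_3-n_4=0$. A monomial in $P_4$ is trivially resonant when $\{n_1,n_3\}=\{n_2,n_4\}$, in which case it is a polynomial in the actions $I_n := |q_n|^2$; all other monomials satisfy $\sum(-1)^i \om_{n_i} \neq 0$ and are non-resonant. The strategy is to construct a sequence of canonical transformations $\Phi^{(k)} = \exp(X_{\chi_k})$ for $k = 2, 3, \dots, A+1$, each solving the cohomological equation $\{H_2, \chi_k\} + (\text{non-resonant part at degree } 2k) = 0$, so that after $A$ iterations
\[ H \circ \Phi^{(A)} = H_2 + Z(I) + R, \]
where $Z$ depends only on the actions $\{I_n\}$ and $R$ is a polynomial of degree $\geq 2A+4$. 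Since $H_2 + Z$ Poisson-commutes with every $I_n$, it preserves the $H^s$ norm exactly; only $R$ drives growth, and on the bootstrap ball $\{\|q\|_{H^s}\leq 2\ep\}$ one has $R = O(\ep^{2A+4})$. A bootstrap then closes on rescaled time $\sim \ep^{-(2A+2)}$, which, through the time scaling $\tau = \ep^2 t$, corresponds to original time $\sim \ep^{-(2A+4)}$, so picking $A = A(B)$ sufficiently large gives the theorem.

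The critical ingredient, and the reason for the probabilistic formulation over $\mw$ in \eqref{dataset}, is a Diophantine lower bound on the small divisors
\[ \Om(\bn) = \sum_{i=1}^{2k} (-1)^i \om_{n_i}, \qquad \sum_{i=1}^{2k}(-1)^i n_i = 0, \]
that appear when inverting the cohomological equation at each step. Because each $\om_n$ depends linearly on the independent random coefficient $\sigma_n$ through $v_n = R(1+|n|)^{-m}\sigma_n$, the quantity $\Om(\bn)$ is a nontrivial affine function of a single $\sigma_{n^*}$, where $n^*$ denotes the largest unpaired frequency, whenever the multi-index is not trivially paired. Hence the set
\[ \bigl\{\,V \in \mw : |\Om(\bn)| < \kappa \langle \max_i |n_i|\rangle^{-\tau} \,\bigr\} \]
has probability $\lesssim \kappa \langle n^*\rangle^m \langle \max_i |n_i|\rangle^{-\tau}$. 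Summing over dyadic shells $N_\al \le \max_i|n_i| < N_{\al+1}$, over the finitely many admissible index patterns at orders $\leq 2A$, and applying Borel--Cantelli with $\tau$ chosen large enough relative to $m$, one extracts a full-measure subset $\mathcal V \subset \mw$ on which the non-resonance bound holds uniformly at every stage.

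Granted the Diophantine estimate, the iteration is organised on dyadic frequency windows in the spirit of Bourgain's formalism, exploiting the multi-index notation $\ia{\bm}$ and $\qa{\bn}$: at stage $\al$ the transformation acts only on frequencies $|\bn| \leq N_\al$, the tail contribution $|\bn| > N_\al$ is absorbed through the decay factor $\hdec$ once $s \gg s_1$, and the resonant piece propagates unchanged to the next stage. Running the bootstrap on $\|q(\tau)\|_{H^s}$ in the transformed variables, and passing back via the fact that $\Phi^{(A)}$ is $O(\ep^2)$-close to the identity on the bootstrap ball, yields the announced bound $\sup_{|t|<\ep^{-B}}\|u(t)\|_{H^s} \leq 2\ep^2$. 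The hard part will be the bookkeeping of derivative losses through the iteration: each cohomological division costs a factor $\langle \max_i|n_i|\rangle^{\tau}$, these losses compound over the $A$ iterations, and the parameters $(m, s, s_1, \tau, A, \{N_\al\})$ must be tuned so that every error term at every stage stays within the target bound. This is precisely where the smoothness of $V$ (large $m$) and the very high regularity $s$ get spent.
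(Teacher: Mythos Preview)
Your high-level strategy---iterated Birkhoff normal form driven by a probabilistic small-divisor estimate, followed by a bootstrap on $\|q\|_{H^s}$---is sound, and is in fact closer to the Bambusi--Gr\'ebert scheme (degree-by-degree elimination leaving $H_2+Z(I)+R$ with $R$ of high order) than to the paper's own route. The paper follows Bourgain: it iterates over frequency windows $N_\al \le |\bn_-| < N_{\al+1}$ in the \emph{smallest} frequency $\bn_-$, not over degrees or over $\max_i|n_i|$, and its final Hamiltonian \eqref{final} retains low-degree non-resonant pieces (the $\Sigma_4$ part, $\mu(\bn)\ge N_\infty$) whose smallness comes from three high-frequency factors rather than from high degree, together with an $\ep^A$-prefactored piece $\Sigma_7$ produced by repeating the transform until the extra $\ep$ gained at each step accumulates (Propositions~\ref{induction1}--\ref{induction3}).

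There is, however, a genuine gap in your divisor estimate. You propose
\[
|\Om(\bn)| \gtrsim \kappa\,\langle \max_i|n_i|\rangle^{-\tau},
\]
but the bound that actually makes either scheme close is the one in Lemma~\ref{lblemma},
\[
|\Om(\bn)| \gtrsim \langle \mu(\bn)\rangle^{-4r^2}\langle \bn_-\rangle^{-m},
\]
involving the \emph{third} largest entry $\mu(\bn)$ and the smallest $\bn_-$. The distinction is decisive in dimension $d>1$. On the zero-momentum set one has $|n_1^*|\le r|n_2^*|$, so in the $H^s$ energy estimate the weight $|n|^{2s}$ is exactly absorbed by $|q_{n_1^*}||q_{n_2^*}|$; any derivative loss incurred by dividing by $\Om(\bn)$ must therefore be paid for by the remaining factors $|q_{n_3^*}|,|q_{n_4^*}|,\dots$ (this is precisely how the $\Sigma_4$ contribution is handled in \eqref{bmn-part}). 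A loss of $\langle n_1^*\rangle^{\tau}$ per cohomological division cannot be absorbed this way when $|n_1^*|,|n_2^*|$ are large but $\mu(\bn)$ stays bounded---a configuration that does not occur on $\mathbb T^1$ (where $\mu(\bn)\gtrsim (n_1^*)^{1/2}$) but is generic on $\mathbb T^d$ for $d\ge 2$. The paper flags exactly this as the main higher-dimensional difficulty. Your probabilistic argument is correct as far as it goes, but you must upgrade it to the $\mu(\bn)$-based bound (Appendix~\ref{AppB}) and then organise the iteration and the coefficient bookkeeping so that every divisor loss is measured against $\mu(\bn)$ and $\bn_-$, not $\bn_+$; otherwise the compounding losses you anticipate in your last paragraph will not be summable.
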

We remark that the theorem holds true for $|u|^{2p}u$ for any  $p\ge 1$.
The analysis is similar, thus for simplicity we present $p=1$ case in this paper. \\
\indent
Theorem \ref{main} is a version of \textit{Birkhoff normal form theorems} for the infinite dimensional Hamiltonian system: the Hamiltonian flow associated to a Hamiltonian remains close to 
the initial state during an arbitrarily long polynomial time ($ \ep^{-B}$), if the initial state has a sufficiently small by $\ep$ in $H^s(\mathbb T^d)$. Some of  instructive expositions for the {\it Birkhoff normal form}  theory can be found in \cite{Ba03, G00, KP}. There are other stability results on \eqref{nls} such as the 
KAM theorem by Elliason-Kuksin \cite{EK} with analytic potentials $V$, or Nekhorochev type theorem by Faou-Grebert \cite{FG} with analytic data. 
{Theorem \ref{main} reproves a dynamical consequence of infinite dimensional Birkhoff normal form theorem by Bambusi-Grebert \cite{BG}. In \cite{BG}, the authors construct an abstract Birkhoff normal theorem to infinite dimensional Hamiltonian systems and apply it to PDEs with tame modulus. \cite{BG} is systematic and applicable to a wide range of PDE examples. \\ 
\indent
In this work, we revisit the problem with a more direct approach to the equation. In performing the Birkhoff normal form, we would like to track how the Hamiltonians are changed. Using a sequence of frequency cut-off we obtain a concrete information on the final Hamiltonian to exhibit the stability result. In fact, we are inspired by Bourgain \cite{Bo00}, and this work follows a similar line of \cite{Bo00}. 
 \\ } 
\indent
In \cite{Bo00} Bourgain consider one dimensional  Schr\"{o}dinger equations with random initial data,
\bq \label{NLS}
 \qquad\quad iu_t = - u_{xx}  + |u|^2u + \la |u|^{2p} u, \quad x\in \mathbb T, \, p >1,
 \eq 
When $\la =0$ \eqref{NLS} has been known to be integrable. In \cite{GKP} the authors proves that the global Birkhoff coordinate exists. In \cite{Bo00} Bourgain proves that for given $B>0$, for almost all initial data (with probability one) the solutions are stable up to time $ \epsilon^{-B}$. The work of \cite{Bo00} does not rely on integrability, however the presence of cubic term $|u|^2u$ is essential. 
As like \cite{BG} and \cite{Bo00} uses Birkhoff normal form, a nonlinear change of coordinate of \textit{symplectic transform} to reduce the \textit{non resonant} terms from the given Hamiltonian. We use the formulation of the sequence of Birkhoff normal form of Bourgain's to obtain a similar result for \eqref{nls}. In use of normal forms, the nonresonancy is inherited from the randomness of initial data as opposed to in \eqref{nls}, where the nonresonancy condition is from random potentials. Indeed, the randomness is explicit in the Hamiltonian \eqref{hqn} for the \eqref{nls} due to the random potential $V$.  If we define 
 $\omega_n  =  |n|^2/\ep^2 + v_n/\ep^2$,  the denominator arising in normal form transform 
 is of the form 
 \[ \Om( \bn) = \om_{n_1} - \om_{n_2} + \cdots  \pm \om_{n_r }\quad \bn= (n_1, n_2, \dots, n_r).\]
As a similar argument to \cite{BG} (or \cite{FG}), we obtain the lower bound estimate
\bq\label{lb-nlsv}
 |\Omega ({\bf n})| \ge
 \ga R \ep^{-2} C^{-r/m}
 \mu({\bf n}) ^ {- {4r^2}}  n_- ^ { - m}
\eq 
for most of potentials $V$. \footnote{
 See a precise probabilistic statement in Lemm \ref{lblemma}.}  The lower bound here depends on $\mu(\bn)$: the third biggest entry among $|n_j|'s$ where $ \bn = (n_1, n_2, \dots, n_r)$, and $n_-$ denotes the least entry. For \eqref{NLS} the randomness is given to the initial data by $q_n(0) = \ep (1 + |n|)^{-(1+s)} \sigma_n $.
Indeed, rescaling \eqref{NLS} by \eqref{scaling}, one can write the associated Hamiltonian as
\begin{align*} 
H &=   \sum \ep^{-2}  |n|^2|q_n|^2  + \sum_{n_1-n_2+n_3-n_4 = 0}
q_{n_1}\bar q_{n_2}q_{n_3}\bar q_{n_4}\\
& = \sum \ep^{-2}  |n|^2|q_n|^2 + 2 \left( \sum |q_n|^2 \right)^2 - \sum |q_n|^4 
+ \sum_{\substack{n_1-n_2+n_3-n_4 = 0\\ n_1^2 - n_2^2 + n_3^2 - n_4^2 \neq 0 }}
q_{n_1}\bar q_{n_2}q_{n_3}\bar q_{n_4}.
\end{align*}
The latter equality follows from that all the resonant terms of $q_{n_1}\bar q_{n_2}
q_{n_3}\bar q_{n_4}$ are fully resonant on $\mathbb T$.\footnote{
If $\bn= (n_1, n_2, n_3, n_4)$ $n_i\in \mathbb Z$ satisfies
$ n_1-n_2+n_3-n_4 =  n_1^2 - n_2^2 + n_3^2 - n_4^2=0$, it implies
 $\{ n_1, n_3\} = \{n_2, n_4\}$.}
If we replace $|q_n|^2$ by  $ J_n = |q_n|^2 - |q_n(0)|^2$,  the randomness comes into play in the Hamiltonian: 
\[ H 
=  \sum \underbrace{(n^2/\ep^2 -2|q_n(0)|^2)}_{\om_n}J_n + J_n^4+  \sum_{\substack{n_1-n_2+n_3-n_4 = 0\\ n_1^2 - n_2^2 + n_3^2 - n_4^2 \neq 0 }}
q_{n_1}\bar q_{n_2}q_{n_3}\bar q_{n_4}.\]
In \cite{Bo00} the lower bound estimate of $\Om(\bn)$, 
\begin{equation}\label{lb-nls} 
\Om(\bn) \ge \ep^2 {n^*_1}^{-5r^2}n_-^{-2s}
\end{equation}
holds with large probability, where
$n_1^*$ is  the biggest entry of $\bn = (n_1, n_2, \dots, n_r)$. 
Note that the right hand side has  also the factor $n_-^{-2s}$.
 Because $s$ is chosen to be large
for the perturbation terms of Hamiltonian to be small enough,
the lower bound of \eqref{lb-nls} becomes smaller as increasing $s$.
This small denominater issue can be overcome
if coefficients of perturbation terms are appropriately small.
In \cite{Bo00} the author performed the normal form 
transformation to \eqref{nls} inductively to have the series of Hamiltonians and  to reach 
the final one, for which coefficients are small as desired.
Once the right induction hypothesis are assumed on the size of coefficients of polynomials in Hamiltonian, the consequential analysis goes straightforward in \cite{Bo00}.\\    {
\indent In this paper, we apply the technique in \cite{Bo00} to higher dimensional case $\mathbb T^d$. As opposed to one-dimensional case, the lower bound of the small denominator \eqref{lb-nlsv} is involved in the third largest frequency $\mu(\bn)$. For 1 dimensional  \eqref{nls} as well as \eqref{NLS} there is no difference in analysis if 
 $n_1^*$ and  $\mu(\bn)$ are replaced with each other in the lower bound estimates \eqref{lb-nlsv} or \eqref{lb-nls}.
It is due to that
$n_1- n_2 + \cdots = 0$ implies  $ \mu(\bn) \gtrsim (n^*_1)^{1/2}$ on $\mathbb T^1$. But this is no longer true for $\mathbb T^d$, $d>1$. (See the estimates around \eqref{bmn-part} ).
%
Another aspect of our approach is that we are able to see the regularity of the potential  $V$ with respect of $B$ and $s$. Indeed, $m$ is less than $O(\frac s B)$ and  $s$ is  bigger than $B^3$.}
  \\
 \indent
 We  mention that the abstract theorem in  \cite{BG} is applied to several other equations
 than \eqref{NLS} 
 to obtain Birkhoff normal form theorems.
It might be possible that the inductive use of normal form transform in \cite{Bo00} can be 
applied to reprove the known results on Birkhoff normal form theorems. We have not pushed 
in this direction, however the method seems quite robust.  To our knowledge the similar use of  iterative Birkhoff normal form transforms is found in \cite{Wa}.  In \cite{Wa} Wang proved a long time Anderson localization for the 1-d lattice nonlinear random Schr\"{o}dinger equation. We also remark that in \cite{CHL} 
Cohen,Hairer, and Lubich proved a long time stability result for 1-d nonlinear Klein-Gordon equation via modulated Fourier expansion method without using Birkhoff normal form.
%
\\
\\
The paper is organised as follows: In Section 2, we state preliminary setting of Hamiltonian systems and Birkhoff normal form as well as the estimate of the denominator. Section 3 includes the main analysis of the Birkhoff normal form. We present the reduction of Hamiltonian and the estimate of coefficient of them. In Section 4, we provide the proof of main theorem.
\subsection*{Notations} $ $\\
We abuse multi indices notation in \emph{bold}.
\begin{align*}
n &= (n^1, \dots, n^d) \in \mathbb Z^d, \quad |n|^2  = |n^1|^2 + \cdots + |n^d|^2 \\
\bf m &= (m_1, \dots, m_l)\in  \underbrace{\mathbb Z^d \times \cdots \times \mathbb Z^d}_{l\mbox{ times}} \\
\bf n  &=  (k_1, \dots, k_L, p_1, \dots, p_L) =({\bf k, \bf p})  \in 
\underbrace{\mathbb Z^d \times \cdots \times \mathbb Z^d}_{2L \mbox{ times}},
 \\
 |{\bf m}|& = l, \quad |\bn| = 2L \\
I_{\bf m} & = I_{m_1}\cdots I_{m_l}, \quad
q_{\bf n} = q_{k_1}q_{k_2}\cdots q_{k_l} \bar q_{p_1}\cdots \bar q_{p_l}\\
\omega_n & =  \frac{|n|^2}{\ep^2} + \frac{v_n}{\ep^2}.
\end{align*}
We say $ m\in \bf m$ if $m= m_j$ for some $j$. Similarly $n \in \bm \cap \bn$
if $n \in \bm$ and $n\in \bk$ or $n \in \bp$.  In the above notations, $| \bf n| $ 
denotes the degree of $q_{\bn}$, not the length of the vector-valued index $\bm$.
 The juxtaposition of two multi-index is written as
 $(\bm, \bm')$ \textit{i.e.} $(\bm, \bm')= (m_1,m_2, \dots, m_l, 
m_1', \dots, m_k')$ when $ \bm =(m_1, \dots, m_l) $ and $\bm'=
(m_1', \dots, m_k')$. Also we denote
\begin{align*} 
\bn_+& = \mbox{ the biggest entry among }  |n_j|'s \\
\bn_-& = \mbox{ the least entry among }  |n_j|'s \\
 \mu (\bf n) &= \mbox{ the third biggest entry among }\{ |n_j| | j=1, \dots 2L\}, \\
 \Om (\bf n) &= \sum _{n\in \bf k} \omega_n  - \sum_{n\in \bf p} \om_n.
 \end{align*}
 On account of  $\sigma_n = R^{-1}v_n(1+ |n|)^m$, we write
\[\Om({\bf n})= \ep^{-2} [ \sum_{i=1}^L(  k_i^2 +R^2(1 + |k_i|)^{-m} \sigma_{k_i})
 -  \sum_{i=1}^L ( p_i^2 +R^2(1 + |p_i|^2)^{-m} \sigma_{p_i})]
\]
 $\bf n$ is called {\bf{non-resonant}} if   $ (k_1, \cdots, k_L) $ is not equal to $( p_1, \cdots, p_L)$ by a permutation.
Sometimes we denote the largest entry of $\bn$ by  $n_1^*$, and the next biggest entries by $|n_2^*| \ge
|n_3^*| \dots$ etc.

\section{Preliminaries}
\begin{subsection}{Symplectic transfomations}
We briefly review  basic definitions on the infinite dimensional
Hamiltonian system. In practice what we will use in the sequel is the equations \eqref{fmap} -\eqref{defF}.
For more details we refer to \cite{G00}, \cite{KP}.

The phase space $\mathcal P$ is defined by
$$ \quad \mathcal P_s : =  l_s^2 (\mathbb C) \times   l_s^2(\mathbb C),
\mbox{ where }
 l_s^2 (\mathbb C) : = \{ (q_n) \in \mathbb C^{\mathbb Z^d}| \sum |n|^{2s} |q_n|^2< \infty\}.$$
We identify $ q \in H^s(\mathbb T^d)$  with  $(q_n) \in l_s^2 $  by $ q = \sum q_n e^{i n\cdot x}$
and call $\, ( q, \bar q)$ a canonical coordinate of $\mathcal P_s$.
 We endow $\mathcal P_s$  with  
the symplectic 2 form
\[i \sum dq_n \wedge  d \bar q_n,  \]
which induces  the symplectic operator $J$,  Poisson bracket $\{ \, , \, \}$ as follows,
\[ i \sum_{n\in \mathbb Z^d} dq_n \wedge  d \bar q_n ( v, w)  = \langle v, J^{-1} w \rangle,\]
\[
\{ F,  G\} = i \sum_n  \frac{\pa F}{\pa q_n}\frac{\pa G}{\pa \bar{q}_n}
- \frac{\pa F}{\pa \bar{q}_n}\frac{\pa G}{\pa q_n}.
\]
A smooth function  $ F\in C(\mathcal P_s, \mathbb C)$ is called a Hamiltonian. 
The  Hamiltonian vector field associated to $F$  is defined by
\[ X_F = J \na F = \big(-i \frac{\pa F}{ \pa \bar q}, i \frac{\pa F/ }{\pa q }\big)^{T},\]
and the Hamiltonian flow 
associated to  F  by the integral curve  $(q(t), \bar q(t))$ along $J\na F$ such that
 $(q(t), \bar q(t))$  satisfies the ODE
\[ i \ddt (q, \bar q)^T = X_F (q, \bar q).\]
In terms of coordinate $(q_n,  \bar q_n)$ it is written
\begin{equation}\label{odeF}
i \dot{q_n} = \frac{\pa F} {\pa \bar q_n}, \quad n\in \mathbb Z^d.
\end{equation}
\indent
Next we introduce the symplectic transformations.
A  diffeomorphism
  $\varphi : \mathcal P_s \to \mathcal P_s $  is called  \textit{symplectic transformation} if $\varphi$ preserves the 
 Poisson bracket
 \begin{equation}\label{preserve}
  \{ F \circ \varphi, G \circ \varphi \} = \{ F, G\} \circ \varphi.
  \end{equation}
Symplectic transformations  preserve the flow law, that is, if $(q, \bar q)$ is the Hamiltonian flow associated to $H$,  the new coordinate $ (q', \bar q')$ given by
$(q', \bar q') \substack{ \varphi \\ \to} (q, \bar q )$  satisfies the following system of ODE,
\[  i \ddt{ q'_n} = \frac{\pa  H'} {\pa \bar{{q'_n}}} , \quad  H'  = H\circ{ \varphi}.\]\indent
What it follows we consider the symplectic transformation, \textit{time 1-shift}.
For a given Hamiltonian $F$ let us 
consider the  Hamiltonian flow generated by F,
and denote the solution at time $1$ by $q_n$ 
\bq\label{fmap}
 \dot{ {q_n }} = i \frac{\partial F}{\partial \bar{  { q_n}}}, \quad  q_n(0) = q'_n, \quad 
   q_n(1) := q_n.
   \eq
 The map $ q_n' \to q_n $ is called  \textit{ time 1 shift} by $F$, which is denote by $\Phi_F$. The map ${ \Phi^t_F}: { q_n}(0)\to { q_n} (t)$ is defined similar way. 
 It is known that $ \Phi^t_F$  is  symplectic  if
 the ODE \eqref{fmap} is well-posed on $[0, t]$.\\
 \indent
 The Hamiltonian is shifted to $H\circ \Phi_F$ by the coordinate change $\Phi_F$.
 We note that by \eqref{odeF}, \eqref{preserve} and the chain rule it  holds that
\[\frac{d}{dt} (H\circ \Phi^t_F) = \{ H, F\} \circ \Phi^t_F \quad \mbox{ for any } H.\]
Applying the taylor series expansion centered at $t=0$, we have the  following expression of
$H\circ {\Phi_F}$,
\begin{equation}
\label{recall} H\circ {\Phi_F} = \sum_{k = 0}^\infty \frac{1}{k!} \{ \cdots \{ H, 
\underbrace{ F\}, F\}, \cdots, F\} }_{k \text{ times}}
\end{equation}
 \[= H + \{ H, F\} + \frac{1}{2!}\{ \{H, F\}, F\} +  h. o. t. \]
 \indent 
 Now we demonstrate how to reduce a lower order polynomials of given Hamiltonian using 
 a time 1-shift.
Back to the Hamiltonian 
 \[H=\underbrace{ \sum \om_n |q_n|^2}_{H_0} + \underbrace{\sum_{l(\bn) =0}a_{\bn} q_{n_1}\bar q_{n_2} q_{n_3}\bar q_{n_4}}_{H_1},\]
 we have a shifted Hamiltonian
\begin{align*}
 H\circ \Phi_F &= (H_0 + H_1) \circ \Phi_F  \\
& = H_0 +\underbrace{ H_1 +  \{ H_0, F\}}_{\mbox{ degree of } 4} +  \{ H_1, F\}
 + \text{h.o.t.}
\end{align*}
by a time $1$-shift by $F$.
If  we choose
\begin{equation}\label{defF}
F = \sum_{l(\bn) =0, \Omega(\bn) \neq 0}i \frac{a_{\bn}}{ \Omega(\bn)} q_{n_1}\bar q_{n_2} q_{n_3}\bar q_{n_4}, \mbox{ when }  \Omega(\bn) = \om_1 - \om_2 + \om_3 - \om_4,
\end{equation} it is straightforward to compute
\[ \{H_0, F\} = -(H_1 - \sum_{\Om(\bn)=0} a_\bn q_{n_1} \bar q_{n_2} q_{n_3} \bar q_{n_4} ).\]
We can only  reduce 'non resonant' monomials of $\Omega(\bn) \neq 0$, meanwhile,  there are abundant resonant momonials in $H$.  This is where the randomness comes to play by modulating 
frequency $\Om({\bf n})$ so that the denominator is away from zero at a large probability.
\end{subsection}
\begin{subsection}{The lower bound of the denominator } $ $ \\{
In performing the Birkhoff normal form, we should know that the Hamiltonian has good behaved to the Poisson bracketing. As one see from \eqref{defF}, we require a lower bound of the denominators, $ \Omega(\bn)$,  to satisfy \textit{so called} the strongly non resonant condition. The following lemma guarantees the strongly non resonant condition is rather \emph{generically} satisfied.}
\begin{lemma}\label{lblemma}
Fix $0<\gamma <1$ small enough, and $\Om(\bn)$, $V$, and $\mathcal W$ are the same as above.
  There exists a set $F_{\gamma} \subset \mathcal W$ whose 
measure is larger than $1-\gamma$ such that  if $ $V$ \in F_{\gamma}$ then 
\begin{equation}\label{dsize}
|\Omega ({\bf n})| \ge
 \ga R \ep^{-2} C^{-r/m}
 \langle \mu({\bf n}) \rangle^ {- {4r^2}} \langle \bn_- \rangle^ { - m}
\end{equation}
for all non resonant ${\bf n} = (k_1, \dots, k_r, p_1,\dots, p_r)$ and a constant 
$C=   ( 40/\gamma)^{4}  $.
\end{lemma}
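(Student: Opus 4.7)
The plan is to define $F_\ga$ as the intersection, over all non-resonant $\bn$, of the events $\{|\Om(\bn)| \ge \delta(\bn)\}$ with $\delta(\bn)$ the target lower bound in \eqref{dsize}, and to verify $\mathbb P(\mw\setminus F_\ga)\le\ga$ by summing the per-$\bn$ probabilities. First decompose
\[
\Om(\bn) = \ep^{-2}\Bigl(\sum_i|k_i|^2 - \sum_i|p_i|^2\Bigr) + R\ep^{-2}\sum_{n\in\bn} \mu_n\jb{n}^{-m}\sigma_n =: D(\bn) + X(\bn),
\]
where $\mu_n$ is the net signed multiplicity of $n$ in $\bk$ minus its multiplicity in $\bp$. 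The key dichotomy is that $\ep^{2}D(\bn)$ is a sum of squared lattice norms, hence an integer, so either $|D(\bn)|\ge\ep^{-2}$ or $D(\bn)=0$.

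For $\bn$ with $|D(\bn)|\ge\ep^{-2}$, the random part satisfies $|X(\bn)|\le rR\ep^{-2}$, so provided $R$ is small relative to $r$ one obtains $|\Om(\bn)|\ge\ep^{-2}(1-O(rR))\gg\delta(\bn)$ deterministically, with no measure consumed. For $\bn$ with $D(\bn)=0$ we exploit randomness: non-resonance ensures some $\mu_n\ne 0$, and picking $n^\#(\bn)$ as the index of smallest modulus with $\mu_{n^\#}\ne 0$, the conditional distribution of $\Om(\bn)$ given all $\sigma_m$ with $m\ne n^\#$ is affine-linear in $\sigma_{n^\#}$ with slope $R\ep^{-2}\jb{n^\#}^{-m}|\mu_{n^\#}|$. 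Fubini together with the uniformity of $\sigma_{n^\#}$ on $[-\tfrac12,\tfrac12]$ then yields
\[
\mathbb P\bigl(|\Om(\bn)|<\delta\bigr) \le 2\delta\,\ep^{2}R^{-1}\jb{n^\#}^{m}.
\]

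The bookkeeping inequality $\jb{n^\#}^m\le\jb{\bn_-}^m\jb{\mu(\bn)}^{O(1)}$ is immediate if $\bn_-$ itself has nonzero net multiplicity. Otherwise $\bn_-$ pairs up between $\bk$ and $\bp$; stripping these pairs leaves $\Om(\bn)$ unchanged and non-resonance preserved, and iterating, the new smallest unpaired index has modulus $\le\mu(\bn)$ except in the degenerate configuration in which only the top two entries $n_1^*,n_2^*$ remain unpaired. In that degenerate case, momentum conservation $\sum k_i=\sum p_i$ together with evenness of $V$ forces $n_2^*=-n_1^*$ on the same side ($\bk$ or $\bp$), whence $\Om(\bn)=2\omega_{n_1^*}$ has modulus of order $|n_1^*|^2/\ep^2$, again trivially beating $\delta(\bn)$.

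Setting $\delta(\bn)=\ga R\ep^{-2}C^{-r/m}\jb{\mu(\bn)}^{-4r^2}\jb{\bn_-}^{-m}$ with $C=(40/\ga)^4$, the per-$\bn$ bound becomes at most $2\ga C^{-r/m}\jb{\mu(\bn)}^{m-4r^2}\jb{\bn_-}^{-m}$. For the union bound one only needs to count $\bn$ of length $2r$ with $D(\bn)=0$ and momentum conservation, namely $d+1$ Diophantine constraints, organized by $(\bn_-,\mu(\bn))$; the exponent $4r^2$ dominates the resulting polynomial count in $\jb{\mu(\bn)}$, while the geometric factor $C^{-r/m}=(40/\ga)^{-4r/m}$ controls the remaining sum over $r$, so the total bad-set measure is $\le\ga$ for the stated $C$. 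The main obstacle is the bookkeeping in the zero-net-multiplicity subcase: the naive slope estimate loses a factor of up to $\jb{n_1^*}^m$, and only the combination of momentum conservation and evenness of $V$ collapses that loss, a combinatorial step whose careful formulation is the most delicate part of the proof.
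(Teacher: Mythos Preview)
Your dichotomy has a genuine gap in the $D(\bn)\neq 0$ branch. The parameter $R$ is fixed once and for all by the potential class $\mathcal W$ while $r$ is unbounded, so no condition ``$R$ small relative to $r$'' can hold uniformly; once $rR\ge 1$ the random piece $|X(\bn)|\le rR\ep^{-2}$ can cancel the integer jump $|D(\bn)|\ge\ep^{-2}$ and there is no deterministic lower bound. Nor can you simply move this case into the probabilistic branch: the per-$\bn$ estimate $\mathbb P(|\Om(\bn)|<\delta)\le 2\delta\ep^{2}R^{-1}\jb{\bn_-}^{m}$ is fine, but without the constraint $D(\bn)=0$ the top two indices $n_1^*,n_2^*$ are untied to $\mu(\bn)$, so for each fixed value of $\mu(\bn)$ there are infinitely many admissible $\bn$ and the union bound diverges. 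A secondary issue is that for merely non-resonant (not fully non-resonant) $\bn$ your $n^\#$ can be as large as $\mu(\bn)$, so the per-$\bn$ probability picks up a factor $\jb{\mu(\bn)}^{m}$ that $\jb{\mu(\bn)}^{-4r^2}$ cannot absorb when $r$ is small and $m$ is large, as it is here; and the ``degenerate'' two-index configuration you single out is in fact vacuous, since $\sum_n\mu_n=0$ together with momentum conservation already rules it out.

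The paper, following Faou--Gr\'ebert, uses a structurally different two-step scheme. A preliminary lemma bounds $|\ep^{2}\Om(\bn')-b|$ from below \emph{uniformly over all integers $b$}, with losses in $\jb{\bn'_+}$ and $\jb{\bn'_-}$; this treats every value of the integer part probabilistically, at the price of a finite sum over $|b|\lesssim r\jb{\bn'_+}^{2}$. Then the two largest indices $l_1,l_2$ are peeled off: their quadratic part $|l_1|^2-|l_2|^2$ is an integer absorbed into $b$, while their random part is at most $2R\jb{l_1}^{-m}$, which is either already below the target or forces $|l_1|,|l_2|$ to be polynomially bounded in $\jb{\mu(\bn)}$ and $\jb{\bn_-}$, after which a second appeal to the preliminary lemma on the full tuple closes. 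This two-scale splitting is exactly what produces the exponent $\jb{\mu(\bn)}^{-4r^2}$ while keeping the implicit sum over $n_1^*,n_2^*$ finite.
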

Our strongly nonresonant condition is controlled by the third largest frequency and the lowest frequency, as well as the regularity parameter $m$ of potential space. In one dimensional NLS \eqref{NLS}, we have $\mu({\bf n}) \lesssim (n^*_1)^{1/2} $. Thus, the lower bound may be involved in $n^*_1$. However, in higher dimensional case, this is no longer true. The proof of \eqref{size} is similar to that in Faou and Grebert \cite{FG}. For the convenience of readers, we place it  in the Appendix~\ref{AppB}. \\
\indent
Due to Lemma \ref{lblemma}, if we set $\mathcal V = \cup_{\ga>0} F_{\ga}$, 
then $\mathbb P(\mathcal V) = 1$ and  any $V \in \mathcal V$ satisfies the nonresonant condition 
\eqref{dsize}.
\end{subsection}
\begin{subsection}{The Poisson brackets} $ $ \\
We use the multi index notation as follows:
$$I_{\bf m}  = I_{m_1}\cdots I_{m_l}, \quad
q_{\bf n} = q_{k_1}q_{k_2}\cdots q_{k_l} \bar q_{p_1}\cdots \bar q_{p_l}
, \quad \jl = I_{l_1}\cdots I_{l_j}.$$
 A straightfoward computation shows that 
\begin{align}
&\{ \jm, \jl\} = 0, \quad \{\jm\qn, \jl\} = \jm\{\qn, \jl\}, \nonumber\\
\label{b1}
 &\{ \cmn\jm\qn, \sum \om_l I_l\} 
= (\sum_{i=1}^l \om_{k_i} - \sum_{i=1}^l \om_{p_i})\cmn \jm\qn ,\\ 
\label{b2}
 &\{ \cmn\jm\qn, \sum  I_l^2\} 
= (\sum_{i=1}^l I_{k_i}  - \sum_{i=1}^l I_{p_i})\cmn \jm\qn.
\end{align}
We denote the contraction by 
$ I_{\bf m}^{\sim { m}} = I_{\bf m} / I_m$ for $m \in \bf m$ and  $q_{\bf n}^{ \sim n} = q_{\bf n} / q_n  $ for $n\in \bf k$, or
$q_{\bf n}/ \bar q_n $ if $n\in \bf p$.  
Moreover, when $ \bm = (m_1,\dots, m_{i}, \dots, m_l)$ and $m_i$ is contracted, we 
denote the multi index  after the contraction by $\bm^{\sim m_i}: =  (\dots, m_{i-1}, m_{i+1}, \dots)$.
So $ I_{\bf m}^{\sim { m}}=  I_{\bf m^{\sim { m}}}$ \textit{etc.}
 We denote the number of $n$ appearing in $\bf m$ by 
$\sharp n (\bf m)$ , then compute that  
\[\{ \qn , \jl\} = \sum_{n\in \bk\cap \bl} \sharp n (\bl) \sharp n(\bk) \jl^{\sim n} \qn 
- \sum_{n\in \bp \cap \bl} \sharp n (\bl) \sharp n(\bp) \jl^{\sim n} \qn .\footnote{We introduce $\sharp n(\bf m)$ for the sake of concreteness only.
Mostly we use the upper bound $\sharp n(\bf m) \le \lbm$.} \]
{For simplicity, we slightly abuse notations, writing 
\begin{align}
\label{b3}
 \{ \cmn \jm \qn , a_\bl I_{\bf l} \} &=\ \cmn |\bn| |\bl| \jm \qn
 \sum_{ n\in \bn \cap \bl} a_\bl \jl^{\sim  n}.
\end{align} 
In the sequel, we will estimate the coefficients $\cmn$ for each cases. Thus, the equality means that $\cmn$ of LHS is replaced by new coefficient $\cmn$, (still denoted by $\cmn$), with the same upper bound.} \\
$\{ \jm\qn, \jmp\qnp\}$ give rise to two types, which are occurred from loss of $I_m$ or a pair of $(q_n, \bar q_n)$. As above, we write
\begin{align}\begin{aligned}\label{b4}
\{ \jm\qn,& \jmp\qnp \}  =
& \begin{cases}
 \lbm\lbnp(\sum_{ n\in \bm \cap \bnp} \jm^{\sim  n})
I_\bmp\qn\qnp\\
\lbn\lbmp( \sum_{ n\in \bn \cap \bmp} \jmp^{\sim  n})
\jm\qn\qnp\\\lbn\lbnp
(\sum_{ n\in \bn \cap \bnp}\qn^{\sim  n}\qnp^{\sim  n})\jm\jmp.
\end{cases}
\end{aligned}
\end{align}
\end{subsection}

\section{The reduction of Hamiltonians}
In this section, we discuss how to iterate the symplectic transformations and show that starting from the Hamiltonian \eqref{hqn}, we reduce to the final Hamiltonian $H_b$.  Then, we show  the new Hamitonian flow associated to $H_b$, still denoted by  
$\{q_n(t)\}$,  remains $\ep$-neighborhood of zero for a long time $T$ with $T\sim \ep^{-B}$ for any given $B$.\\
Define the actions of the phase variables. 
\[ I_n^0 = |q_n(0)|^2, \quad I_n(t) = |q_n(t)|^2 =|q_n|^2.\]
Let $N_a$ and $N_\infty$ be cut-off parameters and we set a large parameter $A (>200B)$. In the middle of the reduction procedure, we have the Hamiltonians of the following form:
\begin{align}
H  &= \sum  \omega_n I_n - \sum I_n^2  \qquad &=:\Sigma_0 + \Sigma_1 \nonumber \\
 &+  \sum_{ |\bm_+| < N_{\al}} \cmI   \, +  \, \sum_{ |\bm_+| \ge  N_{\al}} \cmI  \qquad &=:\Sigma_{2_1}
 + \Sigma_{2_2} \nonumber\\
& +  \sum_{N_{\al} \le |\bn_-| \le  \mu (\bn) < N_{\infty}}
 \cmnIQ \qquad &=:\Sigma_3  \nonumber\\
& +  \sum_{ \mu(\bn) \ge N_{\infty} }\cmnIQ \qquad &=:\Sigma_4 \nonumber\\
& +  \sum_{A< \text{deg} \le 2A  }\cmnIQ  \qquad &=:\Sigma_5 \nonumber\\
& + \sum_{ \text{deg} > 2A} \cmnIQ \qquad &=:\Sigma_6 \label{Harrange}\\
&+ \ep^A \sum \cmnIQ \qquad &= : \Sigma_7. \nonumber
\end{align}
Here monomials  of $\Sigma_2$, $\Sigma_3, \Sigma_4$, and $\Sigma_7$  are  of degree $\le A$. 
Moreover, the degree of $\qn$  in $\Sigma_3$ has at least 4.
In $\Sigma_3 \sim \Sigma_7$, $\qn$ is \emph{fully nonresonant} 
in the sense that 
 \[ \{ k_1, k_2 , \cdots\} \cap  \{ p_1, p_2 , \cdots\} = \varnothing.\]
In other words, if it \emph{were} $k_i = p_{j}$, then  the term $q_{k_i}\bar q_{p_{j}} = |q_{k_i}|^2$ already 
makes $I_{k_i}$ and is set aside from $q_\bn$. 
The decomposition is not unique. For example, $I_{\bm}$ can be counted either in 
$\sum_{2}$ or in one of $\sum_{5}, \sum_6, \sum_7$.
We set more parameters
\begin{equation}\label{stau}
s =s_1 + 5\tau, \quad \tau = \frac{10 s}{A},
\end{equation}
where $5\tau$ is a parameter that $\sum_{n\in \mathbb Z^d} 1/|n|^{5\tau} < \infty $, hence 
$5\tau > d$. \eqref{stau} is used in the proof of Proposition \ref{induction3}. \\
\indent
We use an induction argument to prove an iteration of the Birkhoff normal forms changes the initial Hamiltonian \eqref{hqn} to a final Hamiltonian $H_b$. For this purpose, we impose induction hypotheses to coefficients of Hamiltonians and then we check that the hypotheses are still  satisfied after a Birkhoff normal form reduction. 
We propose induction hypotheses as follows:
\begin{align}\label{i0}
&|c_{\bf m}| \le 1+ N_{\al}^{-2s_1} \Pi_j ( |m_j| \wedge N_{\al})^{2s_1}N_{\al}^{2\tau}  =: 1+ N_{\al}^{-2s_1}\PI
   \quad
&\mbox{ for } \Sigma_{2_2}\\ \label{i1} 
&|\cmn| \le N_{\al}^{-4s_1}  \Pi_j ( |m_j| \wedge N_{\al})^{2s_1}N_{\al}^{2\tau} 
\Pi_j (|n_j| \wedge N_{\al})^{s_1}N_{\al}^{\tau} =: N_{\al}^{-4s_1}\PI \Pq  \quad
&\mbox{ for } \Sigma_3,\, \Sigma_5\\ \label{i2}
 &|\cmn| \le   \Pi_j ( |m_j| \wedge N_{\al})^{2s_1}N_{\al}^{2\tau} \Pi_{j\ge 4} (|n_j| \wedge N_{\al})^{s_1}N_{\al}^{\tau} =: \PI \Pqq &\mbox{ for} \Sigma_4\\
 &|\cmn|\le  |\bm||\bn| \Pi_j ( |m_j| \wedge N_{\al})^{2s_1}N_{\al}^{2\tau} 
 \Pi_j (|n_j| \wedge N_{\al})^{s_1}N_{\al}^{\tau} =: |\bm||\bn|\PI \Pq  \quad
&\mbox{ for } \Sigma_6  \label{i3}\\
& |\cmn|\le \Pi_j ( |m_j| \wedge N_{\al})^{2s_1}N_{\al}^{2\tau} 
 \Pi_j (|n_j| \wedge N_{\al})^{s_1}N_{\al}^{\tau} =: \PI \Pq  \quad
&\mbox{ for } \Sigma_7 \label{i4}.
\end{align}
{Note that there is no smallness hypothesis on $ \Sigma_{2_1}$. In fact, eventually, $\Sigma_2$ need not to be small as it is fully resonant term. However, the Poisson bracket with $\Sigma_{2_2}$ produces other terms $ \Sigma_3~\Sigma_7$. Thus, we require the hypothesis on $\Sigma_{2_2}$.}
In $\Sigma_k$,  we put $|n_j|$ into the decreasing order, $|n_1|\ge |n_2| \ge| n_3| \ge \dots$.  We denote
\begin{align*}
\Pi_j ( |m_j| \wedge N_{\al})^{2s_1}N_{\al}^{2\tau} &:=  \PI  \\
\Pi_j (|n_j| \wedge N_{\al})^{s_1}N_{\al}^{\tau} &:= \Pq  \\
\Pi_{j\ge 4} (|n_j| \wedge N_{\al})^{s_1}N_{\al}^{\tau} &:= \Pqq.
\end{align*} 
It follows 
\begin{align}\label{fac}
\I{\bm}{a}\I{\bmp}{a} =\I{ (\bm, \bmp)}{a}, \quad
\Q{\bn}{a}\Q{\bnp}{a} =  \Q{ (\bm, \bmp)}{a}.
\end{align}
One can verify the initial Hamiltonian \eqref{hqn} 
fits into the above description up to 
the initially given constant. For \eqref{hqn}, $N_1=1 $ and $ \cmn= 1 $. Note that 
\[ \sum_{n_1 - n_2 + n_3- n_4} q_{n_1} \bar q_{n_2} q_{n_1} \bar q_{n_4}
=  2 \left(\sum I_m \right )^2 - \sum I_m^2  + 
\sum_{  \{n_1, n_3\} \bigcap \{n_2, n_4\} = \emptyset }q_{n_1} \bar q_{n_2} q_{n_1} \bar q_{n_4}.\] 
Then for the initial Hamiltonian \eqref{hqn}, 
\begin{align*}
 c_{\bf m} = 0 \, \mbox{ for } \Sigma_{2_1}, \Sigma_{2_2} \quad
 c_{\bf m \bf n}  = 1 \, \mbox{  for } \Sigma_3, \Sigma_4,\quad 
 c_{\bf m \bf n} = 0 \,  \mbox{  for } \Sigma_5, \Sigma_6, \Sigma_7
\end{align*}
Now we explain on the form of \eqref{Harrange} and the coefficient bounds 
\eqref{i0} -- \eqref{i4}. 
First of all,  $\cmn$ is naturally bounded by product form: 
\[\cmn \le C \Pi _j(|m_j|\wedge N_a)^{2s_1} \Pi_j(|n|\wedge N_a)^{s_1},\]
then the sum $\sum \cmn \jm \qn $ converges due to  $|q_{n_j}| \le \ep|n_j|^{-s}$ for each $j$. 
To obtain Theorem \ref{main},    $\sum \cmn \jm\qn$ is not only to converge but also be smaller than 
$\ep^{-B}$. For this purpose, we choose large parameters $A, N_\infty$, such that the sum of monomials in 
 $\Sigma_4, \Sigma_5, \Sigma_6,$ and $\Sigma_7$ are small. $\Sigma_3$ may contain harmful terms when the cut-off parameter $N_a$ is small.  But by iteration, we push $N_a$ to larger number and to obtain the smallness from the factor $N_a^{-4s_1}$. To be consistent with this, we impose the condition $  N_a \le |n_-|$. Then  $N_a^{-4s_1} \Pi_{j=1}^4 (|n_j|\wedge N_a)^{s_1}N_a^{\tau} \ge 1$ indeed, so the induction hypothesis \eqref{i2} holds true for \eqref{hqn} 
 for any $N_a$.\\
\indent 
For a given parameter $N_a < N_{a+1}$, we want to remove harmful nonresoant terms of $N_a \le |n_-| \le N_{a+1}$ in $\Sigma_3$ via the Birkhoff normal form transformation. For this purpose we choose Hamiltonian for time 1-shift
\[ F =  -i \sum_{ \substack{3 \\ N_{\al} \le |n_-| < N_{\al+1}}} \frac{\cmn}{\Omega(\bf n)} \jm \qn,\]
then by \eqref{b1} we have  
\[ \{ F, \sum \om_n I_n \} = - \sum_\subal \cmn \jm \qn.\]
\indent
Now, we explain how we proceed normal forms. We set  a  increasing sequence of parameters, 
\[ N_1=1 < N_2 < \cdots <N_{\al} <N_{\al+1} < \cdots < N_{\be} \le N_\infty.  \]
For a fixed $N_a$, in the middle of procedure, Hamiltonians are of the form \eqref{Harrange}. Then we take the Poisson bracket with $F$, $\{ F, \Sigma_k\} $ for each $k=1,\cdots, 7$, and  check 
  the generated  polynomials  in  $\{F, \Sigma_k\}$  can be put into one of $\Sigma_j's$ by
  showing the corresponding  induction hypothesis still holds (see \eqref{bra}). 
  Moreover we show  $H\circ \Phi_F$ allows the decomposition \eqref{Harrange}  satisfying the induction hypothesis with respect to  $N_{\al}$ (Proposition \ref{induction1}). In this step,   $\Sigma_3$ consists of 
  polynomials with a frequency cut-off $ N_{\al+1}$ or that with an extra 
   $\ep$ multiplied.  We iterate the Birkhoff normal forms until all polynomials 
 in   $\Sigma_3$ with $ N_a \le |n_-|  \le N_{a+1} $ are put into $\Sigma_7$.
   Next, we increase the cut-off parameter $N_\al$ to $N_{\al+1} $ and rearrange the Hamiltonian as in $\eqref{Harrange}$ with respect to $N_{\al+1} $ (Propositon \ref{induction3}). 
   We iterate this procedure until $N_a$ reaches  
a sufficiently large $N_b$, for which we will have a desired estimates on coefficients.\\
\indent
In the following we show how to obtain $H_{\al+1}$ from $H_{\al}$ with details.  It will be 
summarized in Proposition \ref{induction3}. First, we study the sums that $\{F, H_{\al}\}$ generates. What it follows $H$ stands for $H_{\al}$, taken off the subscript for notational simplicity.\\

  $\{F,H \} $ gives rise to $ \{F, \sum_k\}$ for $k=1,\dots,7$ and each case results in several types of sum. \\
\\
{\bf(i) $\{F,\sum I_l^2\}$ }\\
  \\
$ \{ F , \sum I_l^2 \} $ is only of $\Sigma_3$ type; 
\[ \{ F , \sum I_l^2 \}  =   \sum_{\subal} \sum_{ n\in \bn}
\frac{\cmn}{\Omn} \sharp n (\bn) I_n \jm\qn= \sum_n \sum_{\subal}\lbn
\frac{\cmn}{\Omn}I_n \jm\qn . \]
We write the sum $\sum_{n\in \bk}\sharp n (\bk) I_n - \sum_{n\in \bp}
\sharp n (\bp) I_n$ as $\sum_{ n\in \bn}\sharp n (\bn) I_n$, and bound them by 
$\sum_n \lbn I_n$. We apply the estimate of $\Omn$ in \eqref{dsize} with noting that $r (= \mbox{degree} of \qn) \le A $, $\mini \le N_{\al+1}$ for 
the monomial in $F$, and obtain
\begin{align}\label{size}
\Omn ^{-1} \le   \ep^2 N_{\infty}^{A^2}N_{\al+1}^m C^{A/m}.
\end{align}
By \eqref{dsize} and \eqref{i1}, we estimate
\begin{align*}
\left| \lbn \frac {\cmn}{\Omn} \right| & \le A\edeno\dec \cobm \cobn.
\end{align*}
By a trivial bound $ \cobm \le I_{ (\bm, \bmp), N_a}$ we have
\begin{align}\label{sum3-1}
\left| \lbn \frac {\cmn}{\Omn} \right|\le  \ep\dec \I{(\bm, n)}{\al}\Q{\bn}{\al} 
\end{align}
under a condition 
\begin{align}\label{cond1}
\ep A \deno \le 1.
\end{align}
\\
{\bf(ii) $\{F,\Sigma_2\}$} \\
\\
Note that $ \{ F , \sum _{2_1} \} = 0   $ due to frequency separations. \\
\noindent
$ {\{ F , \sum _{2_2} \} } $ becomes of $\Sigma_3$ type or $\Sigma_5$ type. Using \eqref{b3}
\[  \{ F , \sum a_\bl I_\bl \} \le A^2  \sum _{\subal}
\sum_{n\in \bn \cap \bl} \frac{\cmn c_{\bl}}{\Omn}  \jm \jl^{\sim n}\qn.\]

If $ n \in \bm \cap \bl $, then $|(\bm , \bl) \setminus \{n\} | = |\bm|+|\bl|-1$. To obtain the coefficient for $\jm \jl^{\sim n}$, we make product for $ m_j \in (\bm , \bl ^{\sim {n}}) $, and denote 
$$     \I{(\bm, \bl^{\sim n})}{\al} =  \Pi_{j= 1}^ {{|\bm| + |\bl|-1}} ( |m_j| \wedge N_{\al})^{2s_1}N_{\al}^{2\tau}   $$  
ii-1) $\Sigma_3$ type\\
We estimate separately the cases of $ | a_\bl| \le 1$ and $| a_\bl| \le N_{\al}^{-2s_1} \cobl$.
If $| a_\bl| \le 1$, we have
\begin{align} \nonumber
 A^2\left|  \frac {\cmn c_{\bl}}{\Omn} \right| &\le A
 ^2
\edeno \dec \cobm  \cobn\\\nonumber
&\le A^2
\edeno\dec \pim{\lbm + \lbl -1}{\al} \cobn\\ \nonumber
& \le \ep \dec\pim{\lbm + \lbl -1}{\al} \cobn \\  \label{sum3-2}
& \le \ep \dec \I{(\bm, \bl^{\sim n})}{\al} \Pq
\end{align}

under a condition
\begin{align}\label{cond20}
A^2\ep \deno \le 1.
\end{align}
On the other hands, if $| c_\bl| \le N_{\al}^{-2s_1} \cobl$, 
 we have an  factor $ (|n| \wedge N_{\al})^{2s_1} = N_{\al}^{2s_1}$ due to the loss of $I_n$, and 
\begin{align}\nonumber
 A^2\left|  \frac {\cmn c_{\bl}}{\Omn} \right| &\le A^2
\edeno \dec N_{\al}^{-2s_1}\cobm  \cobl \cobn\\ \nonumber
&\le A^2\edeno\dec N_{\al}^{-2s_1} N_{\al}^{2s_1+2\tau}
\pim{\lbm + \lbl -1}{{al}} \cobn\\ \nonumber
& \le \ep \dec  \pim{\lbm + \lbl -1}{{\al}} \pin{\lbn}{ {\al}} \\ \label{sum3-3}
& \le \ep \dec \I{(\bm, \bl ^{\sim n})}{\al} \Pq
\end{align}
under a condition
\begin{align}\label{cond2}
A^2\ep \deno N_{\al}^{2\tau} \le 1.
\end{align}
Note that we have an extra $\ep$ in the coefficient $ c_{(\bm, \bl^{\sim n}) \bn} $ when $\{F, \Sigma _2\}$ results in  $\Sigma_3$.\\
ii-2) $ \Sigma_5$ type\\
The estimate and the condition are similar. \\
\\
{\bf(iii) $\{F,\Sigma_3\}$} \\
It  generates one of types of  $\Sigma_2$, $\Sigma_3$,  $\Sigma_4$, or $\Sigma_5$.
\begin{align*}
 \{ F , \Sigma_3\} 
\le &  \sum _{\substack{\bm,\bn, \\ \bm',\bn'}} A^2 \left ( \sum_{ \bm\in \bnp \cap \bn} 
\frac{\cmn \cmnp}{\Omn} \jm^{\sim n} \jmp \qn \qnp  +
 \sum_{ n\in \bmp \cap \bn} 
\frac{\cmn \cmnp}{\Omn} \jm \jmp^{\sim n} \qn \qnp \right.\\
& \left.  \quad + \sum_{ n\in \bn \cap \bnp} \frac{\cmn \cmnp}{\Omn} \jm \jmp \qn^{\sim n} \qnp^{\sim n} \right) \\
& \quad =  S_1  \,  + \, S_2 \, + S_3,
\end{align*}
where $\bn$, $\bn'$ run through $ N_{\al} \le \mini < N_{\al+1}$,
 $ N_{\al} \le |n'_-| < N_{\infty}$ respectively in the sum $\sum_{\bn, \bnp}$.
Let us treat $S_1$ first and then $S_3$,  for $S_2$ is treated similarly.
\\
iii-1) $\Sigma_3$ type of $S_1$
\begin{align*}
& A^2 \left|  \frac {\cmn \cmnp}{\Omn} \right|\\
 &\le A^2
\edeno \dec\dec \cobm \cobmp \cobn \cobnp\\
&\le A^2\edeno \dec\dec N_{\al}^{2 s_1+2\tau}
\pim{\lbm + \lbmp -1}{\al}\qa{\nnp}   \\
&\le \ep \dec \ia{\msn}\qa{\nnp}
\end{align*}
under a condition
\begin{equation}\label{cond3}
A^2\ep \deno  N_{\al}^{-2 s_1+2\tau}\le 1.
\end{equation}
iii-2) $\Sigma_4$ type of $S_1$\\
At least three entries of $(\bn, \bnp)$ are bigger than $N_{\infty}$.
We have
 \begin{align*}
& A^2 \left|  \frac {\cmn \cmnp}{\Omn} \right|\\
& \le 
 A^2\edeno\dec\hdec \pim{\lbm + \lbmp -1}{\al} \pin{\lbn + \lbnp}{\al}\\
& \le  A^2 \edeno N_{\al}^{-3s_1+3\tau} \pim{\lbm + \lbmp -1}{\al} \Qq{\nnp}{\al}\\
&\le  \ep \ia{\msn}\Qq{\nnp}{\al}
\end{align*}
 under a condition
 \begin{equation}\label{cond33}
\ep A^2\deno N_{\al}^{-3s_1+3\tau} \le 1
 \end{equation}

iii-3)
 $\Sigma_3$ case of  $S_3$\\
  We have
\begin{align} \nonumber
&  A^2 \left|  \frac {\cmn \cmnp}{\Omn} \right|\\ \nonumber
& \le  A^2
\edeno\dec\dec N_{\al}^{2(s_1+\tau)} \ia{\mmp}\pin{\lbn + \lbnp-2}{\al}\\ \label{sum3-4}
&\le \ep\dec \ia{\mmp} \qa{\nsn}
\end{align}
under 
\[ \ep  A^2\deno N_{\al}^{-2s_1} \le 1.\]
iii-4)
 $\Sigma_4$ case of $S_3$ \\
 We have
\begin{align*}
&  A^2 \left|  \frac {\cmn \cmnp}{\Omn} \right|\\
&\le  A^2 \edeno N_{\al}^{-8s_1} \cobm \cobmp \cobn \cobnp \\
& \le  A^2 \edeno N_{\al}^{-8s_1} N_{\al}^{5(s_1+ \tau)}\ia{\mmp}
\Pi_{j\ge 4}^{ \lbn+\lbnp -2} ( |n_j|\wedge N_{\al})^{s_1} N_{\al}^{\tau}\\
&\le  \ep \ia{\mmp}\qa{\nsn}
\end{align*}
under 
\[ \ep A^2 \deno N_{\al}^{-3s_1} \le 1.\]
The  case that   $S_1, S_2, S_3$ generate $\Sigma_5$ term are estimated similarly to $\Sigma_3$. We omit the detail here. \\
We postpone the case where $ \{F, \si_3\}$ generates $\si_2$ in the end of 
the part $ {\bf (iv)}$.
\\
\\
{\bf (iv) $\{F, \Sigma_4\} $ } \\
It gives rise to terms of type $\Sigma_3, \Sigma_4$,  or  $\Sigma_5$. \\
\\
iv-1) 
 $\Sigma_3$ type\\
It is obtained from reduction of a pair of $(q_n, \bar q_n)$, which is the third case in \eqref{b4}. \\
Let us estimate the coefficient bound of 
\[ \sum _{n \in \bn \bigcap \bnp}  A^2\frac{\cmn \cmnp}{\Omn}\jm\jmp\qn^{\sim  n}\qnp^{\sim  n}.\]
We have 
\begin{align*}
&  A^2 \left|  \frac{\cmn \cmnp}{\Omn}\right| \\
&\le A^2 \edeno \dec \pim{\lbm+\lbmp}{\al} \cobn \cobnph.
\end{align*}
Note that for $\{F, \Sigma_4\} $ to be $\Sigma_3$, the reduced $n$ is $|n|\ge N_{\infty}$, 
obviously $ |n| \wedge N_{\al} = N_{\al}$.
And at least two $n'_j$ among $\bar n'$ are $n'_j \ge N_{\infty}$ since 
$\jmp \qnp$ consists of $\Sigma_4$.  The right hand term is bounded by
\begin{align}\nonumber
& \le  A^2 \edeno \dec N_{\al}^{2(s_1 +\tau)} \I{\mmp}{\al} \pin{|\bn| -1}{\al}\Pi_{j= 4} ^{|\bn'|-1}
(\lbnp \wedge N_{\al})^{s_1}N_{\al}^{\tau}\\ \nonumber
&  \le  A^2\edeno \dec \I{\mmp}{\al} \pin{|\bn| -1}{\al}\Pi_{j=1} ^{|\bn'|-1}
(\lbnp \wedge N_{\al})^{s_1}N_{\al}^{\tau}\\ \nonumber
& \le A^2 \edeno \dec  \ia{\mmp} \pin{\lbn + \lbnp-2}{\al}\\ \label{sum3-5}
&\le  \ep \dec \I{\mmp}{\al}\Q{\nsn}{\al}
\end{align}
under 
$ \ep  A^2\deno \le 1$.\\
\\
iv-2) 
 $\Sigma_4$ type\\
   We treat the fisrt and second reducton cases
in \eqref{b4} and the third one separately.
\\
Let us estimate the coefficient bound of 
\[ \sum _{n \in \bn \bigcap \bnp}  A^2\frac{\cmn \cmnp}{\Omn}\jm^{\sim n}\jmp\qn\qnp.\]
We have 
\begin{align*}
&  A^2\left|  \frac{\cmn \cmnp}{\Omn}\right| \\
&\le  A^2\edeno \dec N_{\al}^{2s_1} \pim{\lbm+\lbmp -1}{\al}  \cobn \cobnph.
\end{align*}
If all the  three biggest index  among $ \{ \bn, \bnp\}$ arise in $\bnp$, we bound the right hand side by
\[  A^2 \edeno\dec N_{\al}^{2s_1}  \pim{\lbm+\lbmp -1}{\al}\Pi_{j= 4} ^{\lbn+ \lbnp}
(\lbn \wedge N_{\al})^{s_1}N_{\al}^{\tau}.\]
If  some of three biggest arise in $\bn$, note that  $|n_j| \ge N_{\al}$ for $n_j \in \bn$
and newly included $n'_j$ is such that $|n'_j| \ge  N_{\infty}$. Hence the extra coefficent are cancelled out
in this step. We have 
\begin{align*}
 A^2 \left|  \frac{\cmn \cmnp}{\Omn}\right| \le \ep \I{\msn}{\al} \Qq{\nnp}{\al}
\end{align*}
under 
$ \ep A^2\deno N_{\al}^{-2s_1} \le 1$.
Next we estimate the coefficient bound of 
\[ \sum _{n \in \bn \bigcap \bnp}  A^2\frac{\cmn \cmnp}{\Omn}\jm\jmp\qn^{\sim  n}\qnp^{\sim  n}.\]
\begin{align*}
&  A^2 \left|  \frac{\cmn \cmnp}{\Omn}\right|\\
& \le  A^2\edeno \dec N_{\al}^{2(s_1 + \tau)} \ia{\mmp} \pin{\lbn -1}{\al} 
\Pi_{j= 4} ^{ \lbnp-1}
(\lbn \wedge N_{\al})^{s_1}N_{\al}^{\tau}\\
&\le  A^2\edeno \dec N_{\al}^{2(s_1 + \tau)} \ia{\mmp}
\Pi_{j= 4} ^{\lbn+ \lbnp-2}
(\lbn \wedge N_{\al})^{s_1}N_{\al}^{\tau}
\end{align*}
because the third biggest index among $ \bn \cup \bn' \setminus \{n\}$
is bigger than that among $ \{\bnp\} / \{ n\}$.
We have 
\[   A^2\left|  \frac{\cmn \cmnp}{\Omn}\right| \le \ep  \I{\mmp}{\al}\Qq{\nsn}{\al}  \]
under the condition $\ep A^2\deno N_{\al} ^{-2s_1 + 2\tau} \le 1 $.\\
\\
iv-3) 
The case that $\{F, \Sigma_4\}$ generate $\Sigma_5$ term are estimated as same as $\Sigma_3$.\\
\indent 
$\{F, \Sigma_3\}$ and $\{F, \Sigma_4\}$ can generate $\si_{2_2}$ terms when 
$\overline\qn =  \qnp$. In the induction hypotheses \eqref{i0}-\eqref{i4} we see that the coefficient's bounds for $\si_3$ and $\si_4$ are assumed to be smaller than for $\si_{2_2}$; For 
$\si_3$ it is obvious, and for $\si_4$ it is from
$ \PI \Pqq \le N_{\al}^{-3(s_1 +\tau)} \PI\Pq.$ So by estimates in $ {\bf (iii)}$ and  $ {\bf (iv)}$, 
we have the coefficients of the generated $\Sigma_{2_2}$ term bounded by
\[ |\mathbf{c_m}| \le \ep N_{\al}^{-2s_1}\PI.\]
\\
\\
{\bf (v)} Similarly we have  $\{ F , \Sigma_5 \} = \ep \Sigma_5 + \ep \Sigma_6$, $\{F, \Sigma_6\} = \ep\Sigma_6$,
and  $\{F, \Sigma_7\} = \ep \Sigma_7$. 
Let us verify $\{ F, \Sigma_6\} = \ep\Sigma_6$.  According to \eqref{b4},  we have to show 
\[ A | \bm'| \frac{|\cmn\cmnp|}{\Om(\bn)} \le \ep (\lbm + \lbmp)(\lbn +\lbnp)
I _{((\bm, \bmp)^{\sim n}, N_a)} Q_{((\bn, \bnp), N_a)}\]
when a loss of $I_{\bn}$ occurs, and 
\[ A \lbnp \frac{|\cmn\cmnp|}{\Om(\bn)} \le \ep I _{((\bm, \bmp), N_a)} Q_{((\bn^{\sim n}, \bnp^{\sim n}), N_a)}\]
when a pair of $(q_n, \bar q_n)$ is contracted. For the former, we have 
\begin{align*}
A \lbnp \frac{|\cmn\cmnp|}{\Om(\bn)} &\le 
A \lbmp \edeno N_a^{-4s_1}I_{(\bm, N_a)}I_{(\bm, N_a)}Q_{(\bn, N_a)}Q_{(\bnp, N_a)}\\
&\le A \lbmp \edeno N_a^{-2s_1 + 2\tau}I _{(\msn, N_a)} Q_{((\bn, \bnp), N_a)}\\
&\le \ep (\lbm + \lbmp)(\lbn +\lbnp)
I _{(\msn, N_a)} Q_{((\bn, \bnp), N_a)}
\end{align*}
under the condition  $ A \ep\deno N_a^{-2s_1 + 2\tau} \le 1$.
The other cases are similar. We omit details.\\
\indent
Overall the conditions for  $A, s, \ep, \{N_{\al}, N_{\infty}\}$ is reduced to \eqref{stau} and  
{
\begin{equation}\label{overallcondition}
\ep A^2 N_{\infty}^{4A^2} N_{\infty}^m C^{A/m} \le 1. \end{equation} }
We assume  that $A, s, \ep, \{N_{\al}\}$ satisfy
\begin{align}\label{condition}
 \tau = \frac{10s}{A},  N_{\infty} = \ep^{- \frac{A}{10^2s}},
 s > A^3, \, \mbox{ and } \ep^{\frac 12} C^{A/m} \le 1.
\end{align}

So far, we have proven
\begin{align}\begin{aligned}\label{bra}
\{ F, \si_0\} & = -\Sigma_3\Big|_{N_{\al} \le \mini < N_{\al+1}}\\
\{ F, {\Sigma_1}\} & = \ep\Sigma_3,\\
\{ F, \Sigma_2\} & = \ep \Sigma_3,\\
\{ F, \Sigma_3\} & = \ep \left(\si_2+   \Sigma_3  + \Sigma_4 +\Sigma_5 \right), \\
\{ F, \Sigma_4\} & = \ep( \si_2+ \Sigma_3 +\Sigma_4+\Sigma_5),\\
\{ F, \Sigma_5\} & = \ep  \Sigma_5 + \ep \Sigma_6,\\
\{ F, \Sigma_6\} & = \ep  \Sigma_6,\\
\{ F, \Sigma_7\} & =    \ep\Sigma_7.
\end{aligned}\end{align}
The point is that we have the extra $\ep$ in front of $\si_3$  in $\si_i$ when $i \ge 1
$ (The corresponding estimates are \eqref{sum3-1}, \eqref{sum3-2}, \eqref{sum3-3}, \eqref{sum3-4}, 
and \eqref{sum3-5}).
Let us define 
\[H_F : = H\circ \Phi_F.\]
Recalling \eqref{recall},  the Taylor series expansion formula of $H\circ \Phi_F$ centered at $t=0$, we obtain
\bq\label{taylor}
H_F = \sum_{k=0}^{l} \frac{1}{k!}\{ F,H\}^{(k)} + 
\frac{1}{l!} \int_0^1 (1-t)^l \{F, H\}^{l+1}\circ \Phi_F^t dt.
\eq
We denote 
\[ \{F, H\}^{(k)}:= \{ \underbrace{F,\cdots \{F}_{\mbox{ k times }}, H\},  \cdots\}\]
 and  $\{F, H\}^{(0)} = H$. Under the initial condition $\| q(0)\|_{H^s} \le \ep$ and a
 consistencty condition to be proved in Section $4$, the remainder converges, so we simply write 
 \[ H_F = \sum_{k=0}^{\infty} \frac{1}{k!}\{ H,F\}^{(k)}.\]

\begin{proposition}\label{induction1}
By the induction argument we have
\begin{align*}
  H_{F} =  \si_{0} +  {\Sigma_1} +  (1 + 5\ep)(\Sigma_2 +
   \Sigma_4 + \Sigma_5 + \Sigma_6+ \Sigma_7)
     + \sum_{\subalty}  + 6\ep\Sigma_3.\end{align*}
\end{proposition}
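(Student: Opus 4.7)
The plan is to expand $H_F = H\circ\Phi_F$ using the Taylor formula \eqref{taylor} and to collect terms by $\Sigma$-type using the bracket identities \eqref{bra}. I would write $B_0 := H$ and $B_k := \{B_{k-1}, F\}$ for $k\ge 1$, so that $H_F = \sum_{k\ge 0} B_k/k!$ once absolute convergence of the tail is checked; this convergence is guaranteed by the parameter conditions \eqref{condition} and \eqref{overallcondition}.

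First I would handle $k=1$. I split the $\Sigma_3$ piece of $H$ as $\Sigma_3\big|_{N_{\al}\le\mini<N_{\al+1}} + \sum_\subalty$ and invoke the first line of \eqref{bra}, namely $\{F,\Sigma_0\} = -\Sigma_3\big|_{N_{\al}\le\mini<N_{\al+1}}$, to cancel the low-frequency half; this is precisely the design principle for $F$. The remainder of $B_1 = \sum_i \{\Sigma_i, F\}$ is then $O(\ep)$-small and distributes, according to lines 2--8 of \eqref{bra}, as explicit $\ep$-multiples of $\Sigma_2, \Sigma_3, \Sigma_4, \Sigma_5, \Sigma_6, \Sigma_7$. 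The crucial observation at this stage is that no right-hand side in \eqref{bra} ever contains $\Sigma_0$ or $\Sigma_1$, so those two sectors receive no correction from any order of the expansion.

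For $k \ge 2$ I would argue by induction that $B_k$ is of size $O(\ep^{k-1})$. Since $\Sigma_0$ is consumed in $B_1$ and never reappears, each further application of $\{\cdot,F\}$ produces one fresh factor of $\ep$ via lines 2--8 of \eqref{bra}. Combined with the $1/k!$ prefactor, the tail $\sum_{k\ge 2} B_k/k!$ is absolutely convergent and bounded coefficientwise by a constant times $\ep$ in every $\Sigma_j$-sector.

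The last step is simple bookkeeping. $\Sigma_0$ and $\Sigma_1$ pass through from $B_0$ unchanged. The cancellation of $\Sigma_3\big|_{N_{\al}\le\mini<N_{\al+1}}$ leaves $\sum_\subalty$ as the only non-small part of the $\Sigma_3$-sector, and summing the $\ep$-corrections to $\Sigma_3$ from $B_1$ (four sources: $\{\Sigma_1,F\}$, $\{\Sigma_2,F\}$, $\{\Sigma_3,F\}$, $\{\Sigma_4,F\}$) together with the convergent tail yields the bound $6\ep\,\Sigma_3$. For each of $\Sigma_2, \Sigma_4, \Sigma_5, \Sigma_6, \Sigma_7$, the coefficient $1$ from $B_0$ is supplemented by at most five $\ep$-contributions in the aggregate over $k\ge 1$, yielding the $(1+5\ep)$ prefactor. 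The main obstacle is not an analytical one but careful combinatorial bookkeeping: one must track signs, multiplicities, and the precise sectors fed by each bracket at every order, using the coefficient estimates already carried out in parts (i)--(v) preceding \eqref{bra}.
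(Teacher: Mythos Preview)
Your proposal is correct and follows essentially the same route as the paper: expand $H_F$ via \eqref{taylor}, use the first line of \eqref{bra} at order $k=1$ to cancel $\Sigma_3\big|_{N_\al\le\mini<N_{\al+1}}$, and then show by induction that $B_k$ carries at least $\ep^{k-1}$ for $k\ge 2$ so that the tail sums. The paper makes one point more explicit than you do: each application of $\{F,\cdot\}$ feeds into up to four $\Sigma$-sectors, so the induction hypothesis is stated as $\{F,H\}^{(k)}/4^k = \ep^{k-1}\sum_{2,3,4,5}+\ep^k\sum_{2,3,4,5,6,7}$, and the numerical constants in the statement come from $\max_k 4^k/k! = 32/3$ together with the explicit first-order counts; you should track that $4^k$ growth rather than leaving it inside the $O(\ep^{k-1})$.
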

\begin{proof} $ $ \\
By \eqref{bra}, note that $ \{F,\Sigma_0\} $ cancels $\Sigma_3 $ with $N_{\al} \le \mini < N_{\al+1} $, and we have 
\begin{align}\label{1th}
H + \{ F, H\} &= \si_{0} + {\Sigma_1} + \Sigma_3\Big|_{N_{\al+1} \le \mini < N_{\infty}}  + 4\ep \Sigma_3 \\ \nonumber
& + 
(1 +2\ep) 
(\Sigma_2 +   \Sigma_4 +\si_6) +  (1 +3\ep) \Sigma_5 + (1 +\ep)\Sigma_7.
\end{align}
For $k\ge 2$ we assume the induction hypothesis:
\[ \frac{\{F, H\}^{(k)}}{4^k} = 
  \ep^{k-1}\sum_{2, 3, 4, 5} \, + \,  \ep^k\sum_{2, 3, 4, 5,  6, 7} \]
  where we denote $\sum_i + \sum_j $ by $\sum_{i, j}$ for simplicity.
 If $k=2$, it is straightforward that 
 \[ \frac{ \{F, \{F, H\}\}}{4^2} = 
 \ep\sum_{2, 3, 4, 5 } + \ep^2 \sum_{2, 3, 4, 5, 6, 7} \]
 due to \eqref{bra}. Similarly, the induction hypothesis holds for $ k+1$th step;
\[  \frac{\{F, H\}^{(k+1)}}{4^{k+1}}= \frac 13 \{ F, \frac{\{F, H\}^{(k)}}{4^{k}}\}=
  \ep^{k}\sum_{2, 3, 4, 5 } + \ep^{k+1}\sum_{2, 3, 4, 5, 6, 7}.\]
 It holds that 
 \begin{equation}\label{2th}
 \sum_{k\ge 2} \frac{\{F, H\}^{(k)}}{k!} = 
\frac {32}{3}  \ep\sum_{2, 3, 4 , 5 } + \ep^2 \sum_{2, 3, 4, 5, 6, 7} 
 \end{equation}
with  $\frac{4^k}{k!} \le \frac {32}{3}$. The propostion follows by adding \eqref{1th} and 
\eqref{2th}.
\end{proof}
So far we removed monomials of 
$ \nal$ in $\Sigma_3$ and obtain
an extra $\ep$ factor in front of $\Sigma_3$. We will go on until 
the increasing exponent is begger than  $A$ so that we have $\ep^A \Sigma_3$, which joins $\Sigma_6$;
let us consider the normal form transform  of $H_F$ 
by the associated Hamiltonian 
$\ep F$, where we use the same notation $F$ to denote
\[ F = 
 \Sigma_3\Big|_{N_{a+1}\le |n_-|< N_{a+1}}  \frac{\cmn}{\Omn} \jm\qn\]
with $\cmn \jm\qn$ the monomial of $\Sigma_3$ of $H_F$. Here, $\Sigma_3\Big|_{N_{a+1}\le |n_-|< N_{a+1}}$ means the summation of term with condition $N_{a+1}\le |n_-|< N_{a+1}$.
Similarly as Propostion 
\ref{induction1} we compute $H_F\circ \Phi_{\ep F}$ as follows. 
 \indent 
What it follows,  for notational simplicity, we use 
  Proposition \ref{induction1} in the form of
 \begin{align}\label{new1}
   H_{F} =  \si_{0} +  {\Sigma_1} + \Sigma_2 +    \Sigma_3\Big|_{N_{a+1}\le |n_-|< N_\infty}   + \ep\Sigma_3+
   \Sigma_4 + \Sigma_5 + \Sigma_6+ \Sigma_7.
 \end{align}
\begin{proposition}\label{induction2}
By the induction argument we have
\begin{equation*}
  H_{F} \circ \Phi_{\ep F}= \si_{0} +  {\Sigma_1} + \Sigma_2 +
 \Sigma_3\Big|_{N_{a+1}\le |n_-|< N_\infty} 
 + \ep^{2}\Sigma_3 + \Sigma_4 + \Sigma_5 + \Sigma_6+ \Sigma_7
\end{equation*}
\end{proposition}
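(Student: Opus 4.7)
The plan is to mimic the argument of Proposition~\ref{induction1} verbatim, with the generator $F$ replaced by $\ep F$, so that every application of a Poisson bracket with the generator carries an extra factor of $\ep$. Starting from the Taylor expansion \eqref{recall}, I would write
\begin{equation*}
H_F \circ \Phi_{\ep F} \;=\; \sum_{k=0}^{\infty} \frac{\ep^k}{k!}\,\{H_F, F\}^{(k)},
\end{equation*}
and analyze the first-order contribution $\ep\{H_F, F\}$ before controlling the higher-order terms by the same induction used in \eqref{2th}.

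For the first-order term, the key observation is that the new $F$ has the same structural form as the $F$ used in Proposition~\ref{induction1} (a sum of $\Sigma_3$-type monomials with the resonance denominator $\Omega(\bn)$ built in), so the bracket identities \eqref{bra} apply verbatim. In particular, $\ep\{F,\Sigma_0\}$ produces exactly $-\ep\Sigma_3$ restricted to the range of monomials over which $F$ is defined, and this cancels the $\ep\Sigma_3$ appearing in \eqref{new1}. The remaining first-order brackets $\ep\{F,\Sigma_k\}$ for $k=1,2,4,5,6,7$, together with $\ep\{F,\Sigma_3|_{N_{a+1}\le|n_-|<N_\infty}\}$, each inherit the extra $\ep$ factor from $\ep F$, so that the corresponding right-hand sides in \eqref{bra} are promoted from order $\ep$ to order $\ep^2$. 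The $\Sigma_3$ portions aggregate into the single $\ep^2\Sigma_3$ term in the statement, while the $\Sigma_2,\Sigma_4,\Sigma_5,\Sigma_6,\Sigma_7$ parts are absorbed into the ambient categories; all the required coefficient bounds were already verified case by case in parts~\textbf{(i)}--\textbf{(v)} above and are only strengthened by the additional power of $\ep$.

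For the higher-order terms $\ep^k\{H_F,F\}^{(k)}/k!$ with $k\ge 2$, the argument used to pass from \eqref{1th} to \eqref{2th} applies with essentially no change: each extra Poisson bracketing with $F$ only produces contributions in $\Sigma_2\cup\cdots\cup\Sigma_7$, and the weight $\ep^k$ ensures these are absorbed into $\ep^2\Sigma_3$ (for the $\Sigma_3$ parts) or into the ambient $\Sigma_j$ (for the others). Convergence of the Taylor series is guaranteed by the smallness condition \eqref{overallcondition}. The main obstacle, as in Proposition~\ref{induction1}, is purely bookkeeping: classifying each generated monomial into the correct $\Sigma_k$ category and verifying that its coefficient still satisfies the induction hypotheses \eqref{i0}--\eqref{i4}. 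Since the underlying computations are identical to those already carried out in Section~3, no new estimate is required---only the systematic promotion of $\ep$ to $\ep^2$ wherever the $\Sigma_3$ coefficient appears.
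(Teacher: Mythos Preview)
Your proposal is correct and follows essentially the same approach as the paper's proof: expand via \eqref{recall}, use the bracket identities \eqref{bra} with the extra $\ep$ coming from $\ep F$ to handle the first-order term, and control the higher-order terms by an induction parallel to that in Proposition~\ref{induction1}. One minor refinement the paper makes explicit is that, since $F$ is restricted to $N_a\le|\bn_-|<N_{a+1}$, the cancellation with $\ep\Sigma_3$ is only partial and the leftover $\ep\Sigma_3\big|_{N_{a+1}\le|\bn_-|<N_\infty}$ is absorbed into $\Sigma_3\big|_{N_{a+1}\le|\bn_-|<N_\infty}$ as a harmless $(1+\ep)$ coefficient; this does not affect your argument.
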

\begin{proof}
First we note that 
\[ \{\ep F, \si_0\} = -\ep \Sigma_3\Big|_{N_{a+1}\le |n_-|< N_{a+1}}  .\]
Hence
\[ H_F +  \{\ep F, \si_0\} = \sum_{0, 1, 2} + (1+\ep) \Sigma_3\Big|_{N_{a+1}\le |n_-|< N_\infty}  + \sum_{4,5,6, 7}.\]
On the other hand, we have 
\begin{align*}
\{ \ep F, H_F -\si_0\} & = \{ \ep F, \sum_{1,2} + (1+\ep)\Sigma_3 + \sum_{4,5,6, 7}\} \\
& = \ep (\ep^2 +3 \ep) \Sigma_3 + \ep ( \ep^2 + 2\ep) \Sigma_4 + \ep (\ep^2 +4\ep) \Sigma_5  +\ep^2 \Sigma_6 +\ep\Sigma_7\\
& = \ep^2 \Sigma_3 + \ep^2 \Sigma_4 + \ep^2 \Sigma_5 +\ep^2 \Sigma_6
\end{align*}
according to the policy in front of Proposition \ref{induction1}. We have 
\begin{align*}
H_F + \{\ep F, H_F\} = 
\sum_{0, 1,2} + (1+\ep) \Sigma_3\Big|_{N_{a+1}\le |n_-|< N_\infty}  + \ep^2 \Sigma_3 + (1 + \ep^2) \sum_{4,5,6} +\Sigma_7
\end{align*}
 Note that we have
\[ \{\ep F, H_F\} = (1+\ep^2) \Sigma_3 + \ep^2\sum_{4,5,6}+\ep \Sigma_7.\]
Assume the induction hypothesis hold for $k\ge 2$
\[ \frac{1}{k!} \{\ep F, H_F\}^{(k)} = \ep^{2k} \sum_{ 3,4,5,6} + \ep^{2k-1}\Sigma_7.\]
The $k=2$ case is established by same computations as above. Then it is straightforward that the  $k+1$-step 
holds: 
\begin{align*}
\frac{1}{(k+1)!} \{ \ep F, H_F\}^{(k+1)} 
& = \frac{1}{k+1} \{ \ep F,   \frac{1}{k!} \{\ep F, H_F\}^{(k)} \}\\
&=\frac{ 2 \ep^{2(k+1)}}{k+1} (\Sigma_3 + \Sigma_4 +\Sigma_5 +\Sigma_6)+\frac{\ep^{2k}}{k+1}\Sigma_7.
\end{align*}
So  we have
\begin{align*}
&H_F \circ \Phi_{\ep F}  = \sum_{k=0}^{\infty} \frac{1}{k!} \{ \ep F, H_F\} \\
& = \sum_{0, 1,2}\, + (1+ \ep) \Sigma_3\Big|_{N_{a+1}\le |n_-|< N_\infty}  + \ep^2 \Sigma_3 + (1 +\ep^2) \sum_{4,5,6}
+  \left( \si_{k\ge 2}  \ep^{2k} \right)\sum_{3,4,5,6} + \left(  \si_{k\ge 2} \ep^{2k-1} \right) \Sigma_7\\
& =  \si_{0} +  {\Sigma_1} + \Sigma_2 +
 \Sigma_3\Big|_{N_{a+1}\le |n_-|< N_\infty} 
 + \ep^{2}\Sigma_3 + \Sigma_4 + \Sigma_5 + \Sigma_6+\Sigma_7
\end{align*}
as desired.
 \end{proof}
 We can repeat the above procedure all over again by taking the normal form transformation
 with $\Phi_{\ep^2 F}$.  Denote 
 \[ H_F \circ \Phi_{\ep F}\circ \Phi_{\ep^2 F}\cdots 
 \circ \Phi_{\ep^{k}F} : = H_{F^{(k)}}.\]
 Inductively, we have the following proposition.
 \begin{proposition}\label{induction3}
 If $k > A$ we have
 \[H_{F^{(k)}} = \si_{0} +{\Sigma_1} +\Sigma_2 + 
 \Sigma_3\Big|_{N_{a+1}\le |n_-|< N_\infty} 
 +\Sigma_4 + \Sigma_5 + \Sigma_6 +\Sigma_7.\]
 For such $k$ we denote $H_{F^{(k)}}$ by $H_{\al+1}$. 
The coefficients for  $H_{\al+1}$ satisfy the induction hypothesis in \eqref{i0} -- \eqref{i3} replacing $N_{\al}$ by $N_{\al+1}$.
  \end{proposition}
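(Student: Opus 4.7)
The plan is to iterate Proposition~\ref{induction2}. Starting from $H_F$ (the output of Proposition~\ref{induction1}), we successively apply the symplectic transforms $\Phi_{\ep^l F_l}$ for $l=1,2,\dots,k$, where at each step $F_l$ is built from the current residual $\Sigma_3$-piece in the bad range via the usual homological recipe
\[ F_l = -i\sum_{\substack{3\\ N_\al\le |\bn_-|<N_{\al+1}}}\frac{\cmn}{\Omn}\jm\qn. \]
I intend to prove by induction on $l\ge 1$ that, after applying $\Phi_{\ep^l F_l}$ to $H_{F^{(l-1)}}$,
\[ H_{F^{(l)}} = \Sigma_0 + \Sigma_1 + \Sigma_2 + \Sigma_3\Big|_{N_{\al+1}\le|\bn_-|<N_\infty} + \ep^{l+1}\Sigma_3 + \Sigma_4 + \Sigma_5 + \Sigma_6 + \Sigma_7, \]
with each piece satisfying the corresponding bound in \eqref{i0}--\eqref{i4}. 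The base case $l=1$ is exactly Proposition~\ref{induction2}.

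The induction step repeats the argument of Proposition~\ref{induction2} verbatim with $\ep F$ replaced by $\ep^l F_l$. The homological cancellation $\{\ep^l F_l,\Sigma_0\}=-\ep^l\Sigma_3\big|_{N_\al\le|\bn_-|<N_{\al+1}}$ eliminates the bad piece of $H_{F^{(l-1)}}$, while the remainder of the Taylor expansion $\sum_m\tfrac{1}{m!}\{\ep^l F_l,\cdot\}^{(m)}$ contributes, through the bracket table \eqref{bra}, only terms of $\ep$-order $\ge l+1$ in $\Sigma_3,\Sigma_4,\Sigma_5,\Sigma_6$ and of order $\ge l$ in $\Sigma_7$. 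The combinatorial factor $4^m$ is beaten by $1/m!$, so the series closes and the residual $\Sigma_3$ inherits the prefactor $\ep^{l+1}$. When $l=k>A$ the residual $\ep^{k+1}\Sigma_3$ has coefficient bound
\[ \ep^{k+1}N_\al^{-4s_1}\PI\Pq\le \ep^A\PI\Pq, \]
which matches exactly the shape \eqref{i4} of $\Sigma_7$-coefficients; hence the residual is absorbed into $\Sigma_7$, yielding the stated clean form of $H_{\al+1}$.

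It remains to verify the coefficient bounds with $N_\al$ replaced by $N_{\al+1}$. Surviving monomials now satisfy $|\bn_-|\ge N_{\al+1}$ in $\Sigma_3$ and $|\bm_+|\ge N_{\al+1}$ in $\Sigma_{2_2}$, while $\Sigma_{2_2}$-monomials failing this condition are folded into $\Sigma_{2_1}$. For $\Sigma_3$, since $|\bn|\ge 4$ and $|n_j|\ge|\bn_-|\ge N_{\al+1}$ for all $j$, one has the ratio
\[ \frac{N_{\al+1}^{-4s_1}\prod_j(|n_j|\wedge N_{\al+1})^{s_1}N_{\al+1}^{\tau}}{N_\al^{-4s_1}\prod_j(|n_j|\wedge N_\al)^{s_1}N_\al^{\tau}} = \left(\frac{N_{\al+1}}{N_\al}\right)^{(|\bn|-4)s_1+\tau}\ge 1, \]
so the old bound implies the new one. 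Parallel arithmetic, aided by the parameter relation \eqref{stau} and by analogous factors in $\PI$, handles $\Sigma_{2_2},\Sigma_4,\Sigma_5,\Sigma_6,\Sigma_7$. I expect the main obstacle to lie in this last step: for each sub-Hamiltonian one must weigh the loss from the shrinking negative prefactor against the gain in the product and $N^{\tau}$ factors, and the accounting is especially delicate for $\Sigma_4$ (where only $|\bn|-3$ of the frequencies carry the weight $(\cdot\wedge N)^{s_1}$) and for $\Sigma_{2_2}$ (where no degree gain is available, so the compensation must come entirely from the ratio $N_{\al+1}^{2\tau}/N_\al^{2\tau}$).
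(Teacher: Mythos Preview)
Your overall architecture matches the paper's: iterate Proposition~\ref{induction2} to drive the residual $\Sigma_3$-piece down to $\ep^{k+1}\Sigma_3$, absorb it into $\Sigma_7$ once $k>A$, and then upgrade the coefficient bounds from $N_\al$ to $N_{\al+1}$. The first two steps are fine.

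The gap is in your accounting for the upgrade, specifically for $\Sigma_{2_2}$. You claim the compensation ``must come entirely from the ratio $N_{\al+1}^{2\tau}/N_\al^{2\tau}$''. This will fail: a $\Sigma_{2_2}$-monomial can have small degree (say $|\bm|=2$), so the total $\tau$-gain is only $(N_{\al+1}/N_\al)^{2|\bm|\tau}$ with $2|\bm|\tau = 40s/A$, far short of the $2s_1\approx 2s$ you need to offset the loss $N_\al^{-2s_1}\to N_{\al+1}^{-2s_1}$. The correct mechanism, which the paper uses, is structural: after the upgrade, $\Sigma_{2_2}$ by definition retains only monomials with $|\bm_+|\ge N_{\al+1}$, so there is at least one index $m_j$ with $|m_j|\ge N_{\al+1}$. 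For that single factor, $(|m_j|\wedge N_\al)^{2s_1}=N_\al^{2s_1}$ while $(|m_j|\wedge N_{\al+1})^{2s_1}=N_{\al+1}^{2s_1}$, and this alone delivers the full $(N_{\al+1}/N_\al)^{2s_1}$ compensation.

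You also have the relative difficulty of the remaining cases inverted. The $\Sigma_4$ bound \eqref{i2} carries \emph{no} negative prefactor, so $\PI\Pqq$ is monotone in $N_\al$ and the upgrade is automatic; the same holds for $\Sigma_6$ and $\Sigma_7$. The case that actually requires the $\tau$-mechanism is $\Sigma_5$: there the prefactor $N_\al^{-4s_1}$ must be upgraded, and the compensation comes from the $N^\tau$-factors together with the degree condition $\text{deg}>A$, giving a gain $(N_{\al+1}/N_\al)^{\text{deg}\cdot\tau}\ge (N_{\al+1}/N_\al)^{A\tau}=(N_{\al+1}/N_\al)^{10s}$, which dominates the loss $(N_{\al+1}/N_\al)^{4s_1}$ by \eqref{stau}.
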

  \begin{proof} $ $\\
We show only the second assertion. At the time of  reaching $k > A$
  the coefficients for $H_{\al+1}$ remain to be bounded as  \eqref{i0} -- \eqref{i3}.
  To upgrade $\al$ to $\al+1$ we check \eqref{i0} -- \eqref{i3} separately as follows.\\
   The monomials of $\sum_{2_2}$ satisfy $|n_+| \ge N_{\al+1}$, while the other monomials goes to $\sum_{2_1}$
for which we impose no condition.   So there is at least one $m_j$ of 
  $|m_j| \ge N_{\al+1}$, and for this $m_j$ we have
  \[ |m_j| \wedge N_{\al}  \le  \frac{N_{\al+1}}{N_\al} |m_j| \wedge N_{\al+1},\]
  \begin{align*}
 \hdec\cobm &\le N_{\al+1}^{-2s_1}\left( \frac{N_{\al+1}}{N_\al} \right)^{2s_1} \cobm \\
 & \le N_{\al+1}^{-2s_1}\left( \frac{N_{\al+1}}{N_\al} \right)^{2s_1}\left( \frac{N_{\al+1}}{N_\al} \right)^{-2s_1}  \cobmu.
  \end{align*}
  The monomials of  $\Sigma_3$ satisfy $N_{\al+1} \le |\bn_-|$ and the degree of $\qn $ is at least $4$. 
  We have
  \begin{align*}
  &\dec\cobm\cobn \\ &\le   N_{\al+1}^{-4s_1}\left( \frac{N_{\al+1}}{N_\al} \right)^{4s_1} 
  \left( \frac{N_{\al+1}}{N_\al} \right)^{-4s_1}\cobmu\cobnu.
  \end{align*}
  The hypothesis \eqref{i2} for $\Sigma_4$ is automatically upgraded. The monomials of  $\Sigma_5$ are of degree  bigger than $A$. We have 
 \begin{align*}
  &\dec\cobm\cobn \\
  &\le   N_{\al+1}^{-4s_1} \left( \frac{N_{\al+1}}{N_\al} \right)^{4s_1}  \left( \frac{N_{\al+1}}{N_\al} \right)^{-A\tau}\cobmu\cobnu.
    \end{align*}
	Since $\tau = \frac{10 s}{A}$, the extra coefficient is smaller than $1$.\\
Lastly  $\Sigma_6$,  $\Sigma_7$ are automatically upgraded.


  \end{proof}
We set $N_1 =1$ and $N_{2} = N_{\infty}$, and perform Proposition \ref{induction1}
to Proposition \ref{induction3}. Then $\Sigma_3$ is empty. (The emptiness of $\Sigma_3$
is not crucial for the following analysis). The final Hamiltonian is written as 
\begin{align}\begin{aligned}\label{final}
H_{\be} &
 = \si_{0} + {\Sigma_1} +\Sigma_2 \mbox{ : fully resonant terms  } \\
&+ \sum_{A < \text{deg} \le 2A}  a_{\bm \bn} \jm\qn + \sum_{\mu(\bn)> N_{\infty}} b_{\bm \bn} \jm \qn 
+ \sum_{ \text{deg} > 2A} c_{\bm \bn} \jm\qn \\
& + \ep^A \sum_{\text{deg} <A} \dmn \jm\qn,
\end{aligned} \end{align}
where 
\begin{align}\label{finala}
|a_{\bm \bn}| & \le  N_{\be}^{-4s_1}  \cobmb \cobnb,\\
\label{finalb}
|b_{\bm \bn}| & \le  \cobmb \cobnhb, \\ 
\label{finalc}
| c_{\bm\bn} |&\le   \lbm\lbn \cobmb \cobnb,\\
\label{finald}
|\dmn| & \le \cobmb \cobnb.
\end{align}
The bound \eqref{finala} correspond to the sums, $\Sigma_3, \Sigma_5$, \eqref{finalb} to $\Sigma_4$, 
 \eqref{finalc} to the sum $\Sigma_6$, and \eqref{finald} to $\Sigma_7$.

\section{The proof of Theorem \ref{main}}

\subsection{Estimates on the symplectic transformations} $ $ \\
Before proceeding to prove Theorem \ref{main}, we mension that the stability condition is preserved under the symplectic transforms. Indeed, the new Hamiltonian flow $\widetilde{q}_n $ is obtained from a time-$1$ shift for the evolution
\[ i \dot{q_n} = \frac{\pa F}{  \pa \bar q_n}, \quad q_n(0) = q_n, \quad q_n(1) = \widetilde{q}_n.\] 
Using the definition of $F$ we estimate that $ q_n \sim \widetilde{q}_n $ as follows:
\begin{align*}
&\sum |n|^{2s} | q_n(1) - q_n(0)|^2 \le \sum |n|^{2s} \int_0^ 1 | \text{Im}  \, \bar q_n \pa F / \pa \bar q_n| \\
&\le  \edeno\sum |n|^{2s} \sum _{n\in \bn} |\cmn| |\jm||\qn| \\
&\le  \edeno  \sum_{\substack{ l(\bn)=0\\ 4\le |\bn| \le A} }|\bn|^s |n_1|^s |n_2|^s |q_{n_1}| |q_{n_2}|
 \cobm |I_{m_j}|\Pi_{j\ge 3} (|n_j|\wedge N_{\al})^{s_1} N_{\be}^{\tau} |q_{n_j}|\\
 & \le  \edeno \ep^2\sum_{r=2}^{A-2} r^s \sum_{|\bn| =r} \Pi _{j\ge 3}^{r} (|n_j|^{-(s-s_1) }\ep) \\
&\le \edeno \sum _{r=2}^{A-2} r^s \ep^r \\
& \le \edeno A^s \ep^2 \le \ep^2
\end{align*}
under a condition
\begin{align}\label{ep-s}
A^s \ep  \le 1,
\end{align} and \eqref{overallcondition}.

\subsection{The proof of Theorem \ref{main}} $ $ \\
Fix $A = 200B$. Assume that all parameters satisfy size conditions given so far. (See Appendix \ref{parameter} for summary.) As we have only $N_b=N_2= N_\infty$, we denote it by $N$. We prove that the Hamiltonian flow $ \{ q_n(t)\}$ generated by $H_{\be}$ remains
\[ \| q(t)\|_{H^s} \le 2\ep  \quad \mbox {for} \quad t \le \ep^{-B}.\]
The flow $\{q_n(t)\} $ is given by 
\[ iq_n = \frac{\pa H_{\be}}{\pa \bar q_n}, \]
hence 
\begin{equation}\label{Im}
\left|  \ddt |q_n|^2 \right| \le  \sum ( |\amn| + |\bmn|+|\cmn|+ |\dmn|) 
\left| \mbox{Im} \left ( \frac{ \pa (I_\bm q_\bn)}{\pa  \bar q_n} \bar q_n \right) \right|, 
\end{equation}
where $\amn, \bmn, \cmn$ are bounded by \eqref{finala} -- \eqref{finald}, and $\cmn =0$  if the degree of 
$I_{\bm}q_{\bn}$ is less than $2A$.
The consistency assumption for $ \{q_n\}$ is 
\bq \label{consi} \| q\|_{H^s} \le \ep.\eq
By  \eqref{Im}, we have
\begin{align}\nonumber \label{time} 
\sum_n& |n|^{2s} || q_n(t)|^2 - | q_n(0)|^2 | \le \\
\int_0^T 
\sum_n& |n|^{2s} \sum _{\substack{\bm, \bn \\ l(\bn) =0, n\in \bn}} 
( |\amn| + |\bmn| + |\cmn| + |\dmn|) |q_n|
 \left| \frac{ \pa (I_\bm q_\bn)}{\pa  \bar q_n}  \right|. 
 \end{align}
 We want to bound the integrand of \eqref{time} by $\ep^B$.
 Note that the sums is taken over $\displaystyle\sum _{\substack{\bm, \bn \\ l(\bn) =0}}$. 
 If $\bn = (n_1, \dots, n_l)$, we bound $|n_1^*| \le l|n_2^*|$ from 
 the relation $n_1 - n_2 + n_3 \cdots = 0$. Moreover, we use the estimates for $|n|^{2s}$,
 \begin{align*}
 &|\bn| \le A:  \quad  |n|^{2s} \le A^s |n_1^*|^s|n_2^*|^s,\\
 & |\bn|= l >A : \quad  |n|^{2s} \le l^s |n_1^*|^s|n_2^*|^s \le  A^s N^{ls/A} |n_1^*|^s|n_2^*|^s
 = A^s N^{l \tau /10}|n_1^*|^s|n_2^*|^s,
 \end{align*}
 using $l/A < N^{l/A}$. Then, we estimate
 \begin{align*}
 & \sum_n\sum _{\substack{\bm, \bn \\ l(\bn) =0, n\in \bn}} |n|^{2s}|\amn| |\jm| |\qn| \\
 &\le  N^{-4s_1}A^s \sum _{\substack{\bm, \bn \\ l(\bn) =0}}
 |n_1^*|^s|n_2^*|^s \cobmb \Pi_j (|n_j|\wedge N)^{s_1} N_{\be}^{\frac{11}{10}\tau}|\jm| |\qn|\\
 & \le  N^{-4s_1}A^s N^{2(s_1+ 2\tau)}
 \sum _{\substack{\bm, \bn \\ l(\bn) =0}} |n_1^*|^s|n_2^*|^s
 |q_{n_1^*}| |q_{n_2^*}|
 \Pi_j (|m_j|\wedge N)^{2s_1}N^{2\tau}I_{m_j} \Pi_{j\ge 3}(|n_j|\wedge N)^{s_1} N^{2\tau}|q_{n_j}| \\
 & \le N^{-s_1}A^s \| q\|_{H^s}^2 (\sum_{r\ge 2}r\ep^r)^2\\
& \le  N^{-s_1}A^s \ep^3\le \ep^{A/100} 
 \end{align*}
by \eqref{ep-s},
 where we use $|q_{n_j}|  \le \ep|n_j|^{-s}$ and 
 $N^{2\tau} \sim \ep^{-\frac{1}{100}}$ to bound 
 \bq\label{integrable}(|m_j|\wedge N)^{2s_1}N^{2\tau}I_{m_j} \le \ep^{\frac{199}{100}}|m_j|^{-2(s-s_1)} =
  \ep^{\frac{199}{100}} |m_j|^{-20 s/A}, \eq
 and in turn
 \[  \sum_\bm \Pi_j ( |m_j|\wedge N)^{2s_1}N^{2\tau} I_{m_j} 
 \le \sum_{r\ge 2} \ep^{\frac {199}{100}r} \sum_{|\bm| =r} \Pi_{j=1}^r |m_j|^{-20s/A} 
 \le \sum _{r\ge 2} \ep^{r}. \]
 In the sum $\Sigma_4$,the degree of $\qn$ is less than $A$ and $\mu(\bn) > N$. We have
 \begin{align}\label{bmn-part}\begin{aligned}
 &\sum _{\substack{\bm, \bn \\ l(\bn) =0}} |n|^{2s}|\bmn| |\jm| |\qn| \\
&\le A^s\sum _{\substack{\bm, \bn \\ l(\bn) =0}} 
 |n_1^*|^s|n_2^*|^s |q_{n_1^*}||q_{n_2^*}| |q_{n_3^*}|
\Pi_j (|m_j|\wedge N)^{2s_1} N^{2\tau}I_{m_j}\Pi_{j\ge 4} (|n_j|\wedge N)^{s_1} N^{\tau}|q_{n_j}|\\
 &\le A^s N^{-s} \| q\|_{H^s}^2 \ep^2 \le \ep^{A/200}.
 \end{aligned}\end{align}
 Now we estimate the $\cmn$ part. The needed  $\ep^ {A/100}$ factor is obviously from that $\Sigma_6$ consists of  $I_{\bm}q_{\bn}$ with the degree bigger than $2A$.  Let us assume $\lbm > A/2$. The other case $\lbn > A$ can be treated same.
  \begin{align*}
 &\sum_n\sum _{\substack{\bm, \bn , \lbm >A /2\\ l(\bn) =0, n\in \bn}} |n|^{2s}|\cmn| |I_\bm| |\qn| \\
 &\le \sum_n \sum _{\substack{\bm, \bn , \lbm >A/2 \\ l(\bn) =0, n\in \bn}} |n|^{2s}
 \lbm \lbn \cobmb \Pi_j ( |n_j|\wedge N_b)^{s_1} N_b^{\tau} |I_\bm| |\qn|\\
 &\le \sum _{\substack{\bm, \bn , \lbm >A/2 \\ l(\bn) =0, n\in \bn}}  |n_1^*|^s|n_2^*|^s \lbm \lbn 
 |q_{n_1^*}| |q_{n_2^*}|
 \Pi_j (|m_j|\wedge N)^{2s_1}N^{2\tau}I_{m_j} \Pi_{j\ge 3}(|n_j|\wedge N)^{s_1} N^{2\tau}|q_{n_j}| \\
 & \le  \|q\|_{H^s}^2  \sum_{\lbm >A/2}\lbm \cobmb    \sum _ {\lbn \ge 4}  \lbn  \Pi_{j\ge 3} ^{\lbn}(|n_j|\wedge N)^{s_1} N_{\be}^{2\tau}|q_{n_j}|  \\
 &\le  \|q\|_{H^s}^2 \sum_{r\ge A/2+1} r \Pi_{j=1}^r \sum_{m_j } \ep^{\frac{199}{100}} |m_j|^{- 20s/A}
 \sum_{r\ge 4} r \Pi_{j=3}^r \sum_{n_j} \ep^{\frac {99}{100}} | n_j|^{-20s/A}\\
 & \le \ep^2  \sum_{r\ge A+1}r\ep^{\frac {199}{100}r} \sum_{r\ge 4} r \ep^{\frac{99}{100}r} \le \ep^{A/100}
 \end{align*}
by the consistent assumption  \eqref{consi} and the relation \eqref{integrable}.\\
\indent
The contribution of $|\dmn|$ can be similarly bounded since $\ep^A < N^{-4s_1}$.
\qed

\appendix

\section{ Summary of parameters}\label{parameter} $ $ \\
We introduce many parameters in the analysis. Here, for reader's convenience, we summarize the size relation of parameters. 
In the Theorem \ref{main}, parameter $B>0$ is given. Then we consecutively choose $A, s, \tau, N_{\infty}$ as follows:  
\begin{align*}
& A > 200B, \quad  s > A^3\\
&s =s_1 + 5\tau, \quad \tau = \frac{10 s}{A},  \\
&  N_{\infty} = \ep^{- \frac{A}{10^2s}},
\end{align*}
and further $m, s, A$ are such that 
\[ m \le \frac s A.\]
Also we choose $\ep = \ep(C, A, s)$ such that
\[\ep A^s \le 1, \quad  \ep^{\frac 12} C^{A/m} \le 1, \]
Choosing $ A=200B, \, s=A^3, \, m=A^2$, we have $ A^2 \ll \tau=50A^2 \ll s_1. $ 
In fact, the conditions imposed in Section 3 are reduced to 
\[ \ep A^2 N_{\infty}^{A^2} N_{\infty}^m C^{A/m} \le 1,\]
and it can easily be verified from the above choices.
{Note that in this work, we need only $ N_1, N_\infty$.}

\section{Proof of Lemma \ref{lblemma}}\label{AppB}
\noindent
Lemma \ref{lblemma} is the direct consequence of Proposition 4.
We closely follow the argument in \cite{FG}; the difference here is that the exponent of $\langle \bn_+ \rangle$ involves only $r$. In the original version in \cite{FG} it comes $ \langle \bn_+ \rangle ^{-4rm}$
in Proposition 4.

\begin{lemma}\label{Lem}
Fix $\ga$ and $m > d/2$. There exist a set $F_{\ga}' \subset \mathcal W$
whose mearuse is larger than $1-\ga$ such that if 
$V \in F_{\ga}'$ then for any r 
\[ |\Om(\bn) - b | \ge \ga \frac{\ep_1}{10}
 \langle \bn_+ \rangle^{-3(r+1)} \langle \bn_-\rangle^{-m}\]
for any non resonant $\bn = (n_1, \cdots, n_{2r})$ and for any $b\in \mathbb Z$.
\end{lemma}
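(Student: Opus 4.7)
}
The plan is to run the standard Borel--Cantelli / union-bound argument for small denominators. I would fix a non-resonant multi-index $\bn$ and an integer $b$, condition on all random variables $\sigma_k$ except a carefully chosen pivot $\sigma_{n^*}$, apply a one-dimensional Lebesgue/slicing estimate to bound $\mathbb{P}(|\Om(\bn)-b|<\delta_\bn)$, and then close by summing over all admissible $(\bn,b)$.

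To set up the slicing, I would introduce $M_n(\bn)=\#\{i:k_i=n\}-\#\{i:p_i=n\}$ and rewrite
\[
\Om(\bn) \;=\; \ep^{-2}\sum_n M_n(\bn)\,|n|^2 \;+\; R\,\ep^{-2}\sum_n M_n(\bn)(1+|n|)^{-m}\sigma_n.
\]
Non-resonance of $\bn$ forces some $M_n\neq 0$, and I would pick the pivot $n^*=n^*(\bn)$ to be an index with $M_{n^*}\neq 0$ of minimal $|n^*|$. Freezing $\{\sigma_k\}_{k\neq n^*}$, the map $\sigma_{n^*}\mapsto\Om(\bn)-b$ is affine with slope of modulus at least $R\,\ep^{-2}(1+|n^*|)^{-m}$, so Fubini and the uniform law of $\sigma_{n^*}$ on $[-1/2,1/2]$ give
\[
\mathbb{P}\bigl(|\Om(\bn)-b|<\delta\bigr) \;\le\; \frac{2\delta\,\ep^2\,(1+|n^*|)^m}{R}.
\]

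With $\delta_\bn:=\tfrac{\gamma\ep_1}{10}\langle\bn_+\rangle^{-3(r+1)}\langle\bn_-\rangle^{-m}$, I would set
\[
F_\gamma' \;:=\; \bigcup_{r\ge 1}\;\bigcup_{\substack{|\bn|=2r \\ \text{non-res.}}}\;\bigcup_{b\in\Z}\bigl\{|\Om(\bn)-b|<\delta_\bn\bigr\}
\]
and apply the union bound. Only integers $b$ with $|b|\lesssim r(\langle\bn_+\rangle^2+R)/\ep^2$ can contribute, since $|\Om(\bn)|$ is trivially bounded by this quantity (the rest give an empty event). To tame the factor $(1+|n^*|)^m$ appearing in the probability estimate, I plan to reduce to a case in which $n^*$ is comparable to $\bn_-$: whenever the minimal entry of $\bn$ is symmetric (so $M_{\bn_-}=0$), it contributes nothing to $\Om(\bn)$ and can be stripped off from both the $k$- and $p$-sides, producing a shorter non-resonant tuple $\tilde\bn$ with $|\tilde\bn_-|\ge|\bn_-|$ and $\Om(\tilde\bn)=\Om(\bn)$. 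Iterating until the minimal surviving frequency is asymmetric lets the slicing estimate be applied at $\tilde\bn$, after which a standard combinatorial count of $\bn$ with $|\bn|=2r$ and $\bn_+=N$, the weight $\langle\bn_-\rangle^{-m}$, and the hypothesis $m>d/2$ make the resulting series in $N$ convergent; smallness of $\ep_1$ absorbs the remaining constants so that $\mathbb{P}(F_\gamma')\le\gamma$.

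The main obstacle will be the tension between wanting only $r$ (and not $m$) in the exponent of $\langle\bn_+\rangle$ and the cost $(1+|n^*|)^m$ produced by the slicing estimate, which is a priori as bad as $\langle\bn_+\rangle^m$. Balancing the two forces the reduction step and the choice of pivot to be executed sharply, with a precise count of non-resonant multi-indices of prescribed $\bn_+$, $\bn_-$, and length $2r$, and a careful splitting of the sum in $r$ against the weight $\langle\bn_+\rangle^{-3(r+1)}$; this is precisely where the present argument improves over the version in \cite{FG}, and will be the core of the proof.
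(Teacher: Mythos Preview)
Your approach is essentially the paper's: slice in one random variable, then union-bound over $b$ and over all non-resonant $\bn$. The paper's proof is shorter because it simply takes the pivot at $\bn_-$ (the coefficient $\ep_1\langle n\rangle^{-m}$ is maximal there), obtains $\mathbb P_{\bn,b}\le 2\eta\,\ep_1^{-1}\langle\bn_-\rangle^{m}$ directly from the one-dimensional estimate, sums over the finite range $|b|\lesssim r\langle\bn_+\rangle^{2}$, and closes with $\sum_{\bn}\langle\bn_+\rangle^{-2r}\le\sum_{\bn}\prod_i\langle n_i\rangle^{-1}$. In particular the paper does \emph{not} perform your stripping reduction and does not distinguish the case $M_{\bn_-}(\bn)=0$; it tacitly treats $\bn_-$ as a valid pivot.

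Your extra care is legitimate, but the stripping argument as you outline it does not resolve the obstacle you identify. After stripping, the event $\{|\Om(\bn)-b|<\delta_\bn\}$ becomes $\{|\Om(\tilde\bn)-b|<\delta_\bn\}$ with the \emph{original} threshold $\delta_\bn$, so the slicing cost is $(1+|\tilde\bn_-|)^{m}$ while $\delta_\bn$ only carries $\langle\bn_-\rangle^{-m}$; since $|\tilde\bn_-|$ can be as large as $|\bn_+|$, absorbing the mismatch into the spare powers of $\langle\bn_+\rangle^{-1}$ would cost $\langle\bn_+\rangle^{m}$ and reintroduce $m$ into the exponent, defeating the point. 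Equivalently, one cannot simply deduce the bound for $\bn$ from the bound for $\tilde\bn$, because $\langle\tilde\bn_-\rangle^{-m}\le\langle\bn_-\rangle^{-m}$ goes the wrong way. The honest fix is either to state and prove the lemma only for tuples with $M_{\bn_-}\neq 0$ (this covers the fully non-resonant monomials actually used in $F$), or to note that in the union defining the bad set the event for $\bn$ with $M_{\bn_-}=0$ coincides, as a set in $\mathcal W$, with an event already enumerated at the shorter tuple $\tilde\bn$ and hence need not be counted again; the paper implicitly relies on one of these observations rather than on a reduction that preserves $\delta_\bn$.
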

Let us think the case in which we have two independent random variable $x, y$,
uniformly  distributed over $[- M,  M]$. 
Define a set $A \subset \bbr^2$ by
$ I = \{(x, y)| |x- y + c| \le \eta \} $ for $c \in \bbr$
then 
\[ \mathbb P [ (x, y) \in A ] = |A| = 
\int _{-M}^{M} \int_ { [ -\eta + y -c, \eta+y-c] \cap [-M, M]} dxdy \le 4M\eta.\]
For $A$ not to be empty, $c$ satisfies $|c| \le 2M + \eta$.
Similarly, let $x_1, \dots, x_n $ be indepenent random variables, each 
uniformly  distributed over $[- M,  M]$, and 
\[ A = \{ | a_1x_1 + a_2x_2 + \cdots + a_n x_n + c | \le \eta \}.\]
Assuming $|a_n|= \max\{|a_1|, \dots, |a_n|\}$, we have
\begin{align}\label{nrandom} 
 \mathbb P [(x_1, \dots, x_n) \in B ] \le
\int_{ [-M, M]^{n-1}} \int _{ \frac {1}{a_n}( -\eta -d- \sum_{i=1}^{n-1}a_i x_i)}^
{ \frac {1}{a_n}( \eta -d- \sum_{i=1}^{n-1}a_i x_i)}\chi_{[-M, M]}dx_n \cdots d x_1
\le (2M)^{n-1}\frac{2\eta}{|a_n|}.
\end{align}
For $A$ not to be empty, 
$d$ satisfies $ |d| \le M(|a_1| + \cdots |a_n|)  +\eta$.\\
\begin{proof}
Let $\bn = (n_1, \dots, n_{2r}) = (\bk, \bp)$ and $b \in \mathbb Z$ be given and 
$\eta(\bn)$ be chosen later.
By  \eqref{nrandom}
we have
\begin{align*}
 \mathbb P_{\bn, b} [ \{ (\sigma_{\bk}, \sigma_{\bp})\in [- 1/2 ,  1/2]^{2r}|
 |\sum_{i=1}^r |k_i|^2 + \frac {\ep_1 \sigma_{k_i}}{ \wki^{m}} - 
\sum_{i=1}^r |p_i|^2 + \frac {\ep_1 \sigma_{p_i}}{ \wpi^{m}} - b | \le \eta \}]\\
\le 2\eta \ep_1^{-1} \langle |n_-| \rangle ^{m}.
\end{align*}
Since $\mathbb P_{\bn, b}[ \dots ] = 0$ for  
$ |b| \le \frac 12 \left(\sum_{i=1}^{2r} |n_i|^2 + \frac { \ep_1}{ \wni^{m}}\right) + \eta $,
summing over $ |b| \le 2r ( |n_+|^2 + \ep_1 \langle n_-\rangle^{-m})$, 
we have 
\[ \mathbb P_{\bn} [\dots ] \le 4\eta(\bn) r(2\ep_1^{-1} |n_+|^2\langle n_-\rangle^{m} +1).\]
Therfore  for any non resonant $\bn = (n_1, \cdots, n_{2r})$ and for any $b\in \mathbb Z$, 
we have
\[ \mathbb P [ V = \{ w \in \mathcal W | |\Om(\bn) - b | \le \eta(\bn) \}] \le 
10  \ep_1^{-1}\sum_{ \bn = (n_1, \cdots, n_{2r}) } \eta(\bn)r |\bn_+|^2\langle n_-\rangle^{m}.\] 
The choice of $ \eta(\bn)  \le \ga \frac{\ep_1}{10 r}
 \langle \bn_+ \rangle^{-2(r+1)} \langle \bn_-\rangle^{-m)}$
gives 
\[ \mathbb P [V] \le \ga \sum_{ \bn = (n_1, \cdots, n_{2r}) } \langle \bn_+ \rangle ^{-2r} 
\le \sum_{ \bn = (n_1, \cdots, n_{2r}) } \Pi_{i=1}^{2r} \langle n_i\rangle^{-2} \le \ga.\]
Using $ \langle \bn_+ \rangle ^r \le r $, we set $\eta = \ga \frac{\ep_1}{10}
 \langle \bn_+ \rangle^{-3(r+1)} \langle \bn_-\rangle^{-m}$ and  conclude the lemma.
 \end{proof}
 \begin{proposition}
 Fix $\ga$ and $m > d/2$. There exists a set $F_{\ga}' \subset \mathcal W$
whose mearuse is larger than $1-\ga$ such that if 
$V \in F_{\ga}'$ then for any r 
\[ |\Om(\bn) + \la_1 w_{l_1} + \la_2 w_{l_2} | \ge
 \frac{\ep_1} {10} \ga  ( \ga/40)^{\frac{4r}{m}}
 \langle \bn_+ \rangle^ {- {4r^2}} \langle \bn_- \rangle^ { - m}\]
for any  $\bn = (n_1, \cdots, n_{2r})$, for any $l_1, l_2 \in \mathbb Z$, 
and for any $\la_1, \la_2 \in \{0, 1, -1\}$ 
such that $(\bn, l_1, l_2)$  is non resonant. 
 \end{proposition}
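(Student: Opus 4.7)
My strategy is to parallel the proof of Lemma~\ref{Lem} applied to the augmented combination
\[
\tilde\Om := \Om(\bn) + \la_1\om_{l_1} + \la_2\om_{l_2}.
\]
Collecting coefficients of the independent random variables produces
\[
\ep^2\tilde\Om = D + R\sum_{\bar\jmath} c_{\bar\jmath}\langle\bar\jmath\rangle^{-m}\sigma_{\bar\jmath},
\]
where $\bar\jmath$ ranges over the distinct Fourier indices appearing in $(\bn, l_1, l_2)$, each $c_{\bar\jmath}\in\Z$ is the signed multiplicity, and $D\in\Z$ is a deterministic integer. The hypothesis that $(\bn, l_1, l_2)$ is non-resonant forces $c_{\bar\jmath}\ne 0$ for at least one $\bar\jmath$, which is what we will exploit probabilistically.

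Before applying the probability estimate I would dispose of the trivial regime $\max(|l_1|, |l_2|)\ge 10 r\langle\bn_+\rangle^2$: in this range $|\tilde\Om|\gtrsim |l_i|^2/\ep^2$ already dominates the desired lower bound. Henceforth assume $|l_1|,|l_2|\le 10r\langle\bn_+\rangle^2$. Let $j^*$ achieve the minimum of $\langle\bar\jmath\rangle$ over $\{\bar\jmath : c_{\bar\jmath}\ne 0\}$. Conditioning on all $\sigma_{\bar\jmath'}$ with $\bar\jmath'\ne j^*$ and invoking the one-variable probability bound behind \eqref{nrandom} yields
\[
\mathbb P\bigl[|\tilde\Om|\le \eta\bigr]\le \frac{4\eta\ep^2\langle j^*\rangle^m}{R}.
\]
Choose $\eta = \eta(\bn, l_1, l_2) := \tfrac{\ep_1}{10}\ga(\ga/40)^{4r/m}\langle\bn_+\rangle^{-4r^2}\langle\bn_-\rangle^{-m}$ and take a union bound over all admissible $(\bn, l_1, l_2)$ and $\la_i\in\{0,\pm 1\}$. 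The number of tuples with $\langle\bn_+\rangle\sim N$ is $O(N^{(2r+4)d})$ once $|l_i|$ is constrained as above, and the large negative exponent $-4r^2$ dominates both this polynomial count and the worst-case loss $\langle j^*\rangle^m\lesssim N^{2m}$, producing a total measure $\le \ga$. The resulting good set is $F_\ga'$, from which the stated lower bound follows.

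The principal obstacle is the sub-case in which $\bn$ alone is resonant but the non-resonance is rescued only by $l_1$ or $l_2$. There $j^*\in\{l_1,l_2\}$, so $\langle j^*\rangle^m$ may be as large as $\langle\bn_+\rangle^{2m}$ rather than $\langle\bn_-\rangle^m$. The quadratic exponent $4r^2$ (in place of the linear $3(r+1)$ of Lemma~\ref{Lem}) is precisely chosen to absorb this loss together with the polynomial counting factor $N^{(2r+4)d}$, while the prefactor $(\ga/40)^{4r/m}$ arises from optimizing $\eta$ uniformly in $r\ge 1$ so that the union bound over $r$ remains $\ga$-summable.
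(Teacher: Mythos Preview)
Your disposal of the ``trivial regime'' $\max(|l_1|,|l_2|)\ge 10r\langle\bn_+\rangle^2$ is wrong in the crucial case $\la_1=-\la_2\neq 0$. Take $d\ge 2$ and any $l_1\ne l_2$ with $|l_1|=|l_2|$ (there are infinitely many such pairs of arbitrary size); then $|l_1|^2-|l_2|^2=0$ and
\[
\tilde\Om=\Om(\bn)+\frac{v_{l_1}-v_{l_2}}{\ep^2}=\Om(\bn)+O\bigl(\ep^{-2}\langle l_1\rangle^{-m}\bigr),
\]
which is certainly not $\gtrsim|l_i|^2/\ep^2$. If in addition $\bn$ is resonant (allowed: only $(\bn,l_1,l_2)$ need be non-resonant), $\tilde\Om=O(\langle l_1\rangle^{-m})$. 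So no a priori bound on $l_1,l_2$ in terms of $\bn$ is available, and since the target lower bound is independent of $l_1,l_2$ while your $\eta$ is too, a union bound over the infinitely many admissible $(l_1,l_2)$ cannot converge. Even granting your cutoff $|l_i|\le 10r\langle\bn_+\rangle^2$, the claimed domination $4r^2>(2r+4)d+2m$ fails for small $r$ (e.g.\ $r=2$, $d=2$, $m=2$ gives $16$ versus $20$), so the union bound still diverges.

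The paper's proof is not a union-bound argument at all: it is deterministic on the set $F_\ga'$ already produced by Lemma~\ref{Lem}. In the opposite-sign case it writes $w_{l_i}=|l_i|^2+v_{l_i}$, absorbs the \emph{integer} $|l_1|^2-|l_2|^2$ into the $b$ of Lemma~\ref{Lem} applied to $\bn$ alone, and observes that the random remainder $|v_{l_1}-v_{l_2}|\le 2\ep_1\langle l_1\rangle^{-m}$ is either negligible (when $|l_1|$ is large) or forces $l_1,l_2$ into a bounded box, whereupon Lemma~\ref{Lem} applies to the full tuple $(\bn,l_1,l_2)$. This integer-splitting trick is the missing idea; without it the uniformity in $l_1,l_2$ cannot be obtained by a direct probabilistic estimate.
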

 \begin{proof}
 First if $\la_1 = \la_2 = 0,$ 
it holds trivially due to Lemma \ref{Lem}.\\
Secondly  $ \la_2 = 0, \la_1 = \pm 1:$ we note that 
\[  |\Om(\bn)| \le 3r|n_+|^2: = ( * ) .\]
If $ |l_1|^2 \ge 2 (*)$, we have 
\[ | \Om(\bn) + \la_1 w_{l_1}| > (*).\]
If $ |l_1|^2 \le  2 (*)$, we apply Lemma \ref{Lem} to $\bnp = (\bn, l_1)$ to have
\begin{align}\label{ii}
| \Om(\bn) + \la_1 w_{l_1}| \ge \ga \frac{\ep_1}{10} \langle \sqrt{6r} \bn_+ \rangle ^{-3(r+2)}
\langle \bn_- \rangle ^{-m} \ge \ga \frac{\ep_1}{10} 
\langle \bn_+ \rangle ^{-2r^2}\langle \bn_- \rangle ^{-m},
\end{align}
by using $r \le \langle \bn_+ \rangle^r$.\\
The case  $\la_1, \la_2$ has the same sign can be treated similarly.\\
 It remains to consider the form $ | \Om(\bn) + \la_1 w_{l_1}- w_{l_2}|$. 
We assume $|l_1| \le |l_2|$ wlog and further 
\begin{equation}\label{diff}
|l_2|^2 - |l_1|^2  \le 3(*)
\end{equation}
because $|\Om(\bn)| < (*)$ combining $|l_2|^2 - |l_1|^2  > 3(*)$ 
leads to $ | \Om(\bn) +  w_{l_1}- w_{l_2}| < 2(*)$.\\
By the tiangle inequality it holds that
\begin{align*}
| \Om(\bn) +  w_{l_1}- w_{l_2}| 
 \ge \left| | \Om(\bn) + |l_1|^2-|l_2|^2| - | w_{l_1}- w_{l_2} - (|l_1|^2-|l_2|^2)| \right|.
 \end{align*}
Note that 
\[ |w_{l_1} - w_{l_2} - l_1^2 + l_2^2| \le \left|  \frac{\ep_1 \sigma_{l_1}}{ \langle l_1 \rangle^m} -
\frac{\ep_1 \sigma_{l_2}}{ \langle l_2 \rangle^m}\right| \le \frac{2\ep_1}{\langle l_1 \rangle^m}.\]
Since $|l_1|^2-|l_2|^2 \in \mathbb Z$, by Lemma \ref{Lem} we have 
\[ | \Om(\bn) + |l_1|^2-|l_2|^2| \ge \ga \frac{\ep_1}{10}
 \langle \bn_+ \rangle^{-3(r+1)} \langle n_-\rangle^{-(1+m)}: = (**).\] 
So if $  \frac{2\ep_1}{\langle l_1 \rangle^m} \le \frac{(**)}{2}$, it leads to 
$ | \Om(\bn) +  w_{l_1}- w_{l_2}| \ge \frac{(**)}{2}$. 
Let us  consider the last case 
 $  \frac{2\ep_1}{\langle l_1 \rangle^m} \ge \frac{(**)}{2}$ under \eqref{diff}, 
 that is 
 \[ \langle l_1 \rangle < (40/\ga)^{\frac 1m} \langle \bn_+ \rangle^ {3(r+1)}{m} \langle n_-\rangle, 
 \quad 
 |l_2| < |l_1| + 2\sqrt{(*)}.\]
 Applying Lemma \ref{Lem} to $\max\{ |n_+|, |l_2|\} \le
 \{| n_+|, (40/\ga)^{\frac 1m} \langle \bn_+ \rangle^ {\frac {3(r+1)}{m}} \langle \bn_- \rangle + 2r |n_+|\}$ we conclude
 \begin{equation}\label{fin}
 | \Om(\bn) +  w_{l_1}- w_{l_2}| \ge  \frac{\ep_1} {10} \ga  ( \ga/40)^{\frac{3(r+1)}{m}}
 \langle \bn_+ \rangle^ {- \frac{3(r+1)^2}{m} } \langle \bn_- \rangle^ { - (r +m+1)}.
 \end{equation}
 Comparing the lower bounds \eqref{ii} and \eqref{fin} we have the proposition.
 \end{proof}

\end{document}